\def\ex{\mathrm{ex}}
\newtheorem {Problem} {Problem}[section]
\newtheorem {Theorem} [Problem]{Theorem}
\newtheorem {Lemma}[Problem]{Lemma}
\newtheorem {Corollary}[Problem]{Corollary}
\newenvironment {Proof}{\noindent {\bf Proof.}}{\hfill\ensuremath{\square}}
\newcommand*{\QEDB}{\hfill\ensuremath{\square}}
\begin{document}

\title{ The bipartite Tur\'{a}n number  and spectral extremum for linear forests\thanks{Corresponding author: Xiao-Dong Zhang (Email: xiaodong@sjtu.edu.cn)}
}

\author{Ming-Zhu Chen\thanks{Ming-Zhu Chen is partly supported by    the National Natural Science Foundation of China (No. 12101166)  and  Hainan Provincial Natural Science Foundation of China (No. 120RC453). E-mail: mzchen@hainanu.edu.cn},  Ning Wang\thanks{E-mail: wangn0606@163.com} \\
School of Science, Hainan University, Haikou 570228, P. R. China, \\
Long-Tu Yuan\thanks{Long-Tu Yuan is partly supported in part by National Natural Science Foundation of China grant (No. 11901554) and Science and Technology Commission
of Shanghai Municipality (No. 19jc1420100).  Email: ltyuan@math.ecnu.edu.cn. }\\
School of Mathematical Sciences and Shanghai Key Laboratory of PMMP, \\East China Normal University, Shanghai 200240, P. R. China
\and  Xiao-Dong Zhang
\thanks{  Xiao-Dong Zhang is partly supported by the National Natural Science Foundation of China (Nos. 11971311, 12026230).  E-mail: xiaodong@sjtu.edu.cn}
\\School of Mathematical Sciences, MOE-LSC, SHL-MAC\\
Shanghai Jiao Tong University,
Shanghai 200240, P. R. China}

\date{}
\maketitle

\begin{abstract}
The bipartite Tur\'{a}n number of a graph $H$, denoted by $\ex(m,n; H)$, is the maximum number of edges in any bipartite graph $G=(X,Y; E)$ with $|X|=m$ and $|Y|=n$ which  does not contain $H$ as a subgraph.  In this paper, we determined $\ex(m,n; F_{\ell})$ for arbitrary  $\ell$  and appropriately large $n$ with comparing to $m$ and $\ell$, where   $F_\ell$  is a linear forest which consists of $\ell$ vertex disjoint paths. Moreover, the extremal graphs have been characterized. Furthermore, these results are used to obtain the maximum spectral radius of   bipartite graphs  which does not contain $F_{\ell}$ as a subgraph and characterize  all extremal graphs which attain the maximum spectral radius.

{\it AMS Classification:} 05C35; 05C50; 05C38

{\it Key words:} bipartite Tur\'{a}n number; spectral radius;  linear forest; bipartite graph.
\end{abstract}

\section{Introduction}
In this paper, we consider only simple and undirected  graphs.
Let $G$ be an undirected simple graph with vertex set
$V(G)=\{v_1,\dots,v_n\}$ and edge set $E(G)$, where $n$ is called the \emph{order} of $G$.
For $v\in V(G)$,  the \emph{neighborhood} $N_G(v)$ of $v$  is $N_G(v)=\{u: uv\in E(G)\}$ and the \emph{degree} $d_G(v)$ of $v$  is the number of vertices adjacent to $v$ in $G$.
We write  $N(v)$ for $N_G(v)$ and  $d(v)$ for $d_G(v)$  if there is no ambiguity.
For a set of vertices $X$, we use $N_G(X)$ to denote $\bigcup_{v\in X}N_G(v)$ and $N_G[X]$ to denote $\bigcap_{v\in X}N_G(v)$.
 For $X,Y\subseteq V(G)$,  $e(X)$ denotes the number of edges contained in $X$  and $e(X,Y)$ denotes the number of edges of $G$ with one end in $X$ and the other in $Y$.
 For two vertex disjoint graphs $G$ and $H$,  we denote   $G\cup H$   by the \emph{union}  of $G$ and $H$,
and $G\nabla H$ by  the \emph{join} of $G$ and $H$ which is obtained by joining every vertex of $G$ to every vertex of $H$.
Moreover, denote by $kG$   the union of $k$ disjoint copies of $G$ and $\overline{G}$ the complement graph of $G$.  In addition, denote by $M_t$ the  disjoint union of $\lfloor t/2 \rfloor$ disjoint copies of edges and $\lceil t/2 \rceil-\lfloor t/2 \rfloor$ isolated vertex (maybe no isolated vertex).
 Furthermore, denote by $Z^k_{m,n}$ the $(m,n)$-bipartite graph obtained by  identifying a vertex of degree $k$ in a complete bipartite graph $K_{k,n}$ and one vertex of degree $m-k$ in a complete bipartite graph $K_{m-k,1}$. In particular,  $Z^1_{m,n}$  is called the {\it double star}.
 Moreover, denote by  $P_k$  a path of order $k$.
%


\subsection{The bipartite Tur\'{a}n extremal problem}
 For two graphs $G$ and $H$, we say that a graph $G$ is \emph{$H$-free} if  it does
not contain a subgraph isomorphic to $H$, i.e., $G$ contains no copy of $H$.
The {\it Tur\'{a}n number}, denoted by $\ex(n,H)$,  of a graph $H $ is the maximum number of edges in  $H$-free graphs on $n$ vertices. It is a fundamental problem of extremal graph theory to determine the value $\ex(n,H)$ for a graph $H$, and which is called the \emph{Tur\'{a}n problem}.  The Erd\H{o}s-Stone-Simonovits theorem  (see \cite{erdos1966,erdos1946}) stated that
$\ex(n,H)=\left(1-\frac{1}{\chi(H)-1}\right)\binom{n}{2}+o(n^2)$, where $\chi(H)$ is the  chromatic number of $H$. Hence it is a challenge and notorious problem to determine the order of magnitude of $\ex(n,H)$ for a bipartite graph $H$.  For a bipartite graph $H$, the bipartite Tur\'{a}n number, denoted by $\ex(m,n; H)$, of a bipartite graph $H$ is the maximum number of edges in an $H$-free bipartite graph $G=(X,Y;E)$ with $|X|=m$, $|Y|=n$ and $m\le n$.  The problem of determine  the value $\ex(m,n; H)$ is called the \emph{bipartite Tur\'{a}n problem}, or the  \emph{Zarakiewicz problem} (may see \cite{babai2009, furedi1996-1,furedi1996-2, furedi2013}), which is closed interconnected to the Tur\'{a}n problem.  F\"{u}redi proved that $\ex(m,n;K_{s,t})\le (s-t+1)^{1/t}nm^{1-1/t}+tm^{2-2/t}+tn$ for $m\ge s$, $n\ge t$, $s\ge t\ge 1$.  Some recent results and progress on $\ex(m,n;H)$ for bipartite graphs $H$  can be referred to \cite{alon2003,sudakov2020,Yuan2019.1} and references therein.


In 1959, Erd\H{o}s and Gallai in \cite{Erdos1959} proved the following pioneering result, which opens a new subject for bipartite graphs.

\begin{Theorem}\label{Thm1.1}(Erd\H{o}s and Gallai \cite{Erdos1959}).
Let $n\geq k$. Then $\ex(n, P_k)\leq(k-2)n/2$.
\end{Theorem}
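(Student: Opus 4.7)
I would prove the theorem by strong induction on $n$. For the argument to run cleanly, I would first strengthen the claim to hold for \emph{all} $n \geq 1$; this is harmless, since whenever $n \leq k-1$ one has $e(G) \leq \binom{n}{2} = n(n-1)/2 \leq n(k-2)/2$ automatically, and the base case $n=1$ is trivial.

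For the inductive step, let $G$ be $P_k$-free on $n$ vertices. If $G$ is disconnected with components $G_1,\ldots,G_s$ of orders $n_1,\ldots,n_s < n$, each $G_i$ is $P_k$-free, and the inductive hypothesis gives $e(G_i) \leq (k-2)n_i/2$; summing over $i$ yields the desired bound.

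If $G$ is connected, the plan is to exhibit an endpoint of a longest path whose degree is at most $(k-2)/2$ and delete it. I take a longest path $P = v_1 v_2 \cdots v_t$ of $G$. Since $G$ is $P_k$-free we have $t \leq k-1$, and since $n \geq k > t$ the path $P$ is not Hamiltonian. Maximality of $P$ forces $N(v_1), N(v_t) \subseteq V(P)$. Setting
\[ A = \{i \in \{2,\ldots,t\} : v_1 v_i \in E(G)\}, \qquad B = \{i \in \{2,\ldots,t\} : v_t v_{i-1} \in E(G)\}, \]
we have $|A| = d(v_1)$ and $|B| = d(v_t)$. If some $i$ lay in $A \cap B$, then $v_1 v_2 \cdots v_{i-1} v_t v_{t-1} \cdots v_i v_1$ would be a cycle through all of $V(P)$; combined with connectivity of $G$ and the existence of a vertex off $P$ (guaranteed by $n > t$), this cycle could be rotated into a path on $t+1$ vertices, contradicting the maximality of $P$. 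Hence $A \cap B = \emptyset$, and thus
\[ d(v_1) + d(v_t) = |A| + |B| \leq t - 1 \leq k - 2, \]
so $\min\{d(v_1), d(v_t)\} \leq (k-2)/2$. Deleting the endpoint of smaller degree yields a $P_k$-free graph on $n-1$ vertices, and the inductive hypothesis gives
\[ e(G) \leq \frac{k-2}{2} + \frac{(k-2)(n-1)}{2} = \frac{(k-2)n}{2}, \]
as required.

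The step I expect to be the main obstacle is justifying $A \cap B = \emptyset$: one must convert the prospective Hamilton cycle on $V(P)$, together with a neighbor off $P$, into a strictly longer path than $P$. Once this rotation-style observation is in place, the induction is routine, and indeed the degree-sum bound $d(v_1)+d(v_t) \leq t-1$ on the endpoints of a longest path is the essential structural input for the entire argument.
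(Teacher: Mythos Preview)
Your proof is correct; the longest-path/rotation argument you outline is the standard route to the Erd\H{o}s--Gallai bound, and the step you flag (turning the cycle on $V(P)$ plus an off-path neighbor into a strictly longer path) goes through exactly as you describe. Note, however, that the paper does not supply its own proof of this statement: the theorem is quoted from Erd\H{o}s and Gallai's 1959 paper as a known tool and invoked later without argument, so there is nothing in the paper to compare your approach against.
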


In 1984, Gy\'arf\'as, Rousseau and Schelp determined $\ex(m,n;P_k)$ and characterized all bipartite extremal graphs for all values of $m,n$ and $k$ in \cite{Gyarfas-Rousseau-Schelp-1984}. 

	\begin{Theorem}\label{Thm1.2}(Gy\'arf\'as, Rousseau and Schelp \cite{Gyarfas-Rousseau-Schelp-1984}).\label{THM: path in bipartite setting}
	For three positive integers $k,m,n$ with $n\geq m\geq p^\prime+1$ and  $p^\prime=\lfloor k/2 \rfloor-1\ge 0$,  the following holds.

(1) If $k$ is even, then
$$\ex(m,n;P_{k})=\left\{
  \begin{array}{ll}
    p^\prime n, & \hbox{for $p^\prime+1\leq m\leq 2p^\prime$;} \\
     p^\prime(m+n-2p^\prime), & \hbox{for $m\geq 2p^\prime+1$.}
  \end{array}
\right.$$
Moreover, the extremal graph is either $K_{p^\prime,m-p^\prime}\cup K_{p^\prime, n-p^\prime}$ or $K_{p^\prime, n}\cup \overline{K}_{m-p^\prime}$.

(2) If $k=3$, then

$$\ex(m,n;P_3)=m.$$
Moreover, the extremal graph is  $M_m\cup \overline{K}_{n-m}$.

(3) If $k=5$, then
$$\ex(m,n;P_5)=\left\{
  \begin{array}{ll}
    n+m, & \hbox{for $m=n$ and $n$ is even;} \\
    n+m-1, & \hbox{otherwise.}
  \end{array}
\right.$$
 Moreover, if $n=m$ is even, then the unique extremal graph consists of copies of  $C_4$, otherwise the extremal graphs consist of copies of $C_4$ and a star or a double star.

(4) If $k$ is odd with $k\geq 7$, then

$$\ex(m,n;P_k)=\left\{
  \begin{array}{ll}
    (p^\prime+1)^2, & \hbox{for $m=n=p^\prime+1$;} \\
    2(p^\prime+1)^2, & \hbox{for $m=n=2p^\prime+2$;} \\
    p^\prime(m+n-2p^\prime), & \hbox{for $m\geq 2p^\prime+3$ or $n>m=2p^\prime+2$;}\\
      p^\prime n+m-p^\prime, & \hbox{otherwise.} \\
  \end{array}
\right.$$
 Moreover, if $n=m=p^\prime+1$, then the unique extremal graph is $K_{p^\prime+1,p^\prime+1}$;
if $m=n=2p^\prime+2$, then the unique extremal graph is $2 K_{p^\prime+1,p^\prime+1}$;
if $m\geq 2p^\prime+3$ or $n>m=2p^\prime+2$, then the unique extremal graph is $K_{p^\prime,n-p^\prime}\cup K_{p^\prime,m-p^\prime}$;
otherwise,  the unique extremal graph is $Z^{p^\prime}_{m,n}$.
	\end{Theorem}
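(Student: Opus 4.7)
My plan is to prove Theorem~\ref{Thm1.2} by induction on $m+n$ (for fixed $k$), with four parallel cases paralleling the statement. The lower bounds are immediate: one verifies directly that each proposed extremal graph is $P_k$-free. For instance, $K_{p',m-p'}\cup K_{p',n-p'}$ has longest path of order $2p'+1=k-1$ when $k$ is even, and in $Z^{p'}_{m,n}$ any path uses at most $p'$ vertices of the ``small side'' of the $K_{p',n}$ block, so its order is bounded by $2p'+1$ plus a small correction from the pendant star. Counting edges in each construction matches the stated formula.

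For the upper bound I would follow the classical Erd\H{o}s--Gallai template adapted to the bipartite setting, possibly invoking Theorem~\ref{Thm1.1} as a coarse tool in auxiliary subgraphs. Fix an edge-maximal $P_k$-free bipartite graph $G=(X,Y;E)$ and consider its minimum degree $\delta(G)$. If there is a vertex $v\in X\cup Y$ with $d_G(v)\leq p'$, delete $v$ and apply the induction hypothesis to $G-v$; an arithmetic check shows that $e(G)\leq e(G-v)+p'$ is consistent with the desired bound in every parameter regime. If instead $\delta(G)\geq p'+1$, take a longest path $P=v_1 v_2\cdots v_\ell$ with $\ell\leq k-1$; since $N(v_1),N(v_\ell)\subseteq V(P)$ and both endpoints have degree at least $p'+1$, a standard rotation/concatenation argument on $P$ produces either a strictly longer path or a copy of $P_k$, contradicting extremality. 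This dichotomy proves part~(1) and powers the reductions in parts (2)--(4).

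Parts~(2) and (3) serve essentially as base cases: $P_3$-freeness means the graph is a matching plus isolated vertices, giving $M_m\cup\overline{K}_{n-m}$; for $P_5$ a short case analysis shows an extremal graph is a disjoint union of $C_4$'s together with at most one star or double star, treating the parity condition at $m=n$ separately. The main obstacle is part~(4) with $k$ odd and $k\geq 7$, where the shape of the extremal graph changes across four regimes ($m=n=p'+1$, $m=n=2p'+2$, the bulk range giving two complete bipartite blocks, and the ``otherwise'' range giving $Z^{p'}_{m,n}$). Here I would refine the induction step by tracking structural information: when removing a vertex of degree at most $p'$, I would classify whether the residual extremal graph is of two-block type, of $2K_{p'+1,p'+1}$ type, or of $Z^{p'}$ type, and show that re-attaching the removed vertex with at most $p'$ edges either preserves the classification or strictly decreases the edge count. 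The hardest step is the uniqueness analysis in the ``otherwise'' range: nearby bipartite graphs can match the edge count numerically, and each must be excluded by exhibiting an explicit $P_k$ inside any structural deviation from $Z^{p'}_{m,n}$.
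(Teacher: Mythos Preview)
The paper does not prove Theorem~\ref{Thm1.2}: it is quoted from Gy\'arf\'as, Rousseau and Schelp \cite{Gyarfas-Rousseau-Schelp-1984} and used only as an input (in Corollary~\ref{Cor: path in bipartite setting}, Lemmas~\ref{lemma for P7 1}--\ref{lemma for P7 2}, and throughout the proof of Theorem~\ref{linear forest in bipartite graph}). There is therefore no proof in the present paper to compare your proposal against.

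On the substance of your sketch: for part~(1) the minimum-degree dichotomy is sound, and in fact no rotation is needed---in a bipartite graph with $\delta\ge p'+1=k/2$ the endpoint of a longest path already has at least $k/2$ neighbours among the $\lfloor\ell/2\rfloor$ opposite-class vertices on that path, forcing $\ell\ge k$ directly. However, ``an arithmetic check shows'' conceals real work at the regime boundaries: when $m\in\{p'+1,\,2p'+1\}$, deleting a low-degree vertex from the $X$-side can drop you into a different branch of the formula and the naive bound $e(G-v)+p'$ overshoots the target; you must either argue that a low-degree vertex can always be taken on the larger side, or handle these thresholds separately. For part~(4) your proposal is not yet a proof. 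The dichotomy fails outright for odd $k$ (e.g.\ $K_{p'+1,p'+1}$ has $\delta=p'+1$ and is $P_k$-free), so the entire content lies in what you call ``tracking structural information'', and your description gives no concrete mechanism for distinguishing the $Z^{p'}_{m,n}$ regime from the two-block regime or for the uniqueness claims. That is exactly where the original Gy\'arf\'as--Rousseau--Schelp argument does its work, and it does not reduce to delete-and-induct.
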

Note that
$\ex(m,n;P_k)=mn$ for  $n\geq m$ and  $1\leq m\leq\lfloor k/2\rfloor-1 $. Combining with Theorem~\ref{THM: path in bipartite setting}, we have the following corollary.

\begin{Corollary}\label{Cor: path in bipartite setting}
Let $k\geq 2$ and $p^\prime=\lfloor k/2\rfloor-1$.
If $n$ is sufficiently larger than $m$, then $\ex(m,n;P_k)\leq \max\{m, p^\prime(m+n-1)\}$.
\end{Corollary}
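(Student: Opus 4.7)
The plan is to deduce this corollary by a direct case analysis from Theorem~\ref{Thm1.2}, combined with the trivial bound $\ex(m,n;P_k)=mn$ for $1\le m\le p'$ noted just before the corollary. The role of the hypothesis that $n$ is sufficiently larger than $m$ is only to discard the finitely many exceptional ``diagonal'' cases appearing in Theorem~\ref{Thm1.2}, namely $m=n=p'+1$, $m=n=2p'+2$, and $m=n$ with $n$ even in part~(3); once these are ruled out, the desired inequality becomes a one-line arithmetic check in each remaining branch.

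First, for $1\le m\le p'$ one has $\ex(m,n;P_k)=mn\le p'n\le p'(m+n-1)$, so the claim holds. The degenerate cases $k\in\{2,3\}$, for which $p'=0$, give right-hand side $\max\{m,0\}=m$, matching the trivial bound and Theorem~\ref{Thm1.2}(2).

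Assume now $m\ge p'+1$ and $p'\ge 1$. If $k$ is even, Theorem~\ref{Thm1.2}(1) gives $\ex(m,n;P_k)$ equal to $p'n$ or $p'(m+n-2p')$, both clearly bounded by $p'(m+n-1)$. If $k=5$, then for $n\gg m$ we fall into the ``otherwise'' branch of Theorem~\ref{Thm1.2}(3), where $\ex(m,n;P_5)=m+n-1=p'(m+n-1)$ with $p'=1$. If $k\ge 7$ is odd, the hypothesis $n\gg m$ discards the two diagonal cases in Theorem~\ref{Thm1.2}(4), leaving only $\ex(m,n;P_k)\in\{p'(m+n-2p'),\,p'n+m-p'\}$. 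The first is at most $p'(m+n-1)$ trivially; the second requires verifying $p'n+m-p'\le p'(m+n-1)$, which rearranges to $m(p'-1)\ge 0$ and holds since $p'\ge 2$ for $k\ge 7$.

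Since each branch reduces to a one-line inequality, there is no real obstacle. The only effort is the patient bookkeeping across the sub-cases of Theorem~\ref{Thm1.2}(4); quantitatively it suffices to require $n>2p'+2$, so that the diagonal exceptions are automatically avoided.
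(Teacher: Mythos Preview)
Your proof is correct and follows exactly the route the paper intends: the paper itself offers no explicit argument beyond ``combine the trivial bound $\ex(m,n;P_k)=mn$ for $m\le p'$ with Theorem~\ref{THM: path in bipartite setting}'', and you have simply written out that case check in full.

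One small slip in your closing remark: the threshold $n>2p'+2$ does not suffice for $k=5$, since the exceptional case there is $m=n$ even (e.g.\ $m=n=6$ gives $\ex(6,6;P_5)=12>11=p'(m+n-1)$). What is actually needed across all cases is $n>m$, which is precisely what ``$n$ sufficiently larger than $m$'' guarantees.
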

	
A linear forest is a forest consisting of vertex disjoint paths.
Many researchers focused on determining the Tur\'{a}n numbers for a linear forest.  For the rest of this paper, let  $F_{\ell}=\bigcup_{i=1}^{\ell} P_{k_i}$ be a linear forest with   ${\ell}\geq 1$ and
$k_1\geq \cdots \geq k_{\ell}\geq2$ and $ p=\sum _{i=1}^{\ell} \lfloor k_i/2 \rfloor-1$. In 2011, Gorgol \cite{gorgol} obtained a lower bound for $\ex(n, kP_3)$ by  way of construction.  Later,
Bushaw and Kettle \cite{Bushwa2011} determined the exact value $\ex(n, kP_{\ell})$ for sufficient large $n$, which is  improved by  Lidick\'{y}, Liu, and Palmer \cite{lidicky2013} who determined the Tur\'{a}n numbers for linear forests $F_{\ell}$ when the order of a  graph is  sufficiently large.

\begin{Theorem}\label{Thm1.3}(Lidick\'{y}, Liu and Palmer \cite{lidicky2013}).
Let $F_{\ell}=\bigcup_{i=1}^{\ell} P_{k_i}$ be a linear forest with   ${\ell}\geq 1$ and
$k_1\geq \cdots \geq k_{\ell}\geq2$ and $ p=\sum _{i=1}^{\ell} \lfloor k_i/2 \rfloor-1$. If  there exists an $1\le i\le \ell$  with $k_i\neq 3$,  then for sufficiently large $n$,
$$\ex(n,F_{\ell})=  \binom{p}{2}+ p(n-p)+c,$$
where $c=1$ if all $k_i$ are odd and $c=0$ otherwise.
Moreover, if $c=1$ then  equality holds if and only if $G=K_{p}\nabla (\overline{K}_{n-p-2}\cup K_2)$.
Otherwise, the equality holds if and only if  $G=K_{p}\nabla \overline{K}_{n-p}$.
\end{Theorem}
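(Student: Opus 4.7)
My plan is to establish the formula by a matching lower and upper bound: a direct construction for the lower bound, and induction on $n$ together with a path-packing argument for the upper bound.

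For the lower bound, the graphs $K_p\nabla\overline{K}_{n-p}$ and $K_p\nabla(\overline{K}_{n-p-2}\cup K_2)$ clearly have $\binom{p}{2}+p(n-p)+c$ edges. Their $F_\ell$-freeness follows from a vertex-count inside the independent part: any embedded $P_{k_i}$ must, since $\overline{K}_{n-p}$ is independent, use at least $\lfloor k_i/2\rfloor$ vertices from the $K_p$ side, so an embedded $F_\ell$ requires $\sum_{i=1}^{\ell}\lfloor k_i/2\rfloor=p+1$ vertices in $K_p$, one more than available. When $c=1$ (every $k_i$ odd), the single extra edge inside the independent side might a priori save one $K_p$-vertex for some component path; but a direct check shows any such saving requires that path to have even order, which is precluded, so $p+1>p$ persists and $F_\ell$ still fails to embed.

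For the upper bound I would induct on $n$. Let $G$ be an $F_\ell$-free graph on $n$ vertices with $e(G)\ge\binom{p}{2}+p(n-p)+c$ and $n$ large. If $G$ has a vertex $v$ of degree at most $p$, apply induction to $G-v$: the hypothesis gives $e(G-v)\le\binom{p}{2}+p(n-1-p)+c$, and adding $d(v)\le p$ recovers the target bound. The equality case forces $d(v)=p$ and $G-v$ extremal; tracking how $v$ attaches back to the extremal $G-v$ then yields the claimed structure for $G$ (namely that the $p$ ``hubs'' are common neighbours of essentially all remaining vertices, with at most the single extra edge when $c=1$).

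The remaining case $\delta(G)\ge p+1$ is the main obstacle: I must show that any such graph on sufficiently many vertices already contains $F_\ell$, contradicting $F_\ell$-freeness of $G$. The strategy is greedy extraction of components one at a time: use the high minimum degree (and the linear edge surplus) to locate a copy of $P_{k_1}$, then apply the analogous bound, inductively in $\ell$, to the residual graph viewed as a $\bigl(\bigcup_{i\ge 2}P_{k_i}\bigr)$-free graph. The subtlety is that a naive extraction may remove too many incident edges to sustain the induction, so $P_{k_1}$ must be chosen from a ``localized'' portion of $G$---for instance inside a longest path, or from a set of low-degree vertices located via an iterative cleaning of $G$---so that the residual retains sufficient excess edges. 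Carrying out this extraction cleanly, and the bookkeeping for how $p$ and $c$ transform when passing from $F_\ell$ to $\bigcup_{i\ge 2}P_{k_i}$, is the main technical step and determines how large $n$ must be taken.
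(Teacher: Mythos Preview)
This theorem is not proved in the paper; it is stated with attribution to Lidick\'y, Liu, and Palmer \cite{lidicky2013} and invoked only as a cited input (for instance, to bound $2e(G)$ in the proof of Lemma~\ref{Lem3.1}). There is therefore no proof in the present paper against which to compare your attempt.

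Judged on its own, your outline has the right architecture and is close in spirit to the original argument: the constructions and their $F_\ell$-freeness are verified correctly, and the degree-deletion induction on $n$ is the standard reduction. You have also correctly located the crux, namely the case $\delta(G)\ge p+1$. Be aware, though, that minimum degree alone does not suffice here: a disjoint union of copies of $K_{p+2}$ has $\delta=p+1$ yet contains no path on more than $p+2$ vertices, hence no $P_{k_1}$ when $k_1>p+2$. The edge surplus $e(G)\ge pn-O(1)$ must therefore enter essentially into your ``localized extraction'' of $P_{k_1}$, not merely the minimum-degree hypothesis. This is precisely the technical work you defer, so your plan is reasonable but, as you acknowledge, not yet a proof; for the full details one must consult the cited source.
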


Later, Campos and Lopes \cite{campos2015}, and Yuan and Zhang \cite{Yuan2017}, independently,  determined  the exact  Tur\'{a}n number for $ k P_3$.
Very recently,  Yuan and Zhang \cite{Yuan2019.2} determined  the exact Tur\'{a}n number for even liner forest which consists of paths on even number of vertices.

In this paper, motivated by the above results,  we determine the bipartite Tur\'{a}n number $\ex(m,n;F_{\ell})$ for a linear forest $F_{\ell}$ for large $n$ and characterize the bipartite extremal graphs. The first main result of this paper can be stated as follow.

\begin{Theorem}\label{linear forest in bipartite graph}
 Let $F_{\ell}=\bigcup_{i=1}^{\ell} P_{k_i}$ be a linear forest with   ${\ell}\geq 1$ and
$k_1\geq \cdots \geq k_{\ell}\geq2$ and $ p=\sum _{i=1}^{\ell} \lfloor k_i/2 \rfloor-1$.
If $p+1\leq m$ and  $n$ is sufficiently larger with  compare to $p$ and $m$, 
then the following holds.\\
(1). If not all $k_1, k_2, \ldots, k_{\ell}$ are odd, then
$$\ex(m,n;F_{\ell})=\left\{
  \begin{array}{ll}
    pn, & \hbox{for $p+1\leq m\leq2p$;} \\
    p\left(n-\left\lfloor k_\ell/2 \right\rfloor+1\right)
+\left(m-p\right)\left(\left\lfloor k_\ell/2 \right\rfloor-1\right) , & \hbox{for $m\geq 2p+1$}.
  \end{array}
\right.$$
Moreover, the extremal graphs are $K_{p,n}\cup \overline{K}_{m-p}$ for $p\leq m\leq2p-1$,  $K_{p,i}\cup K_{p,n-i}$ with $0\leq i\leq \lfloor k_{\ell}/2\rfloor-1$ for $m=2p$ and $K_{p, n-\lfloor k_{\ell}/2\rfloor+1}\cup K_{m-p,\lfloor k_{\ell}/2\rfloor-1}$  for $m\geq 2p+1$.
\\
(2). If all $k_i$'s are odd and $k_\ell\neq 3,5,7$, then
$$\ex(m,n;F_{\ell})=\left\{
  \begin{array}{ll}
    pn+m-p, & \hbox{for $p+1\leq m\leq 2 p$;} \\
     p\left(n-\left\lfloor k_\ell/2 \right\rfloor+1\right)
+\left(m-p\right)\left(\left\lfloor k_\ell/2 \right\rfloor-1\right), & \hbox{for $m\geq 2 p+1$,}
  \end{array}
\right.$$
Moreover, for $\ell \geq 2$, the extremal graphs are $Z_{m,n}^{p}$  for $p+1\leq m\leq2p$ and $K_{p, n-\lfloor k_{\ell}/2\rfloor+1}\cup K_{m-p,\lfloor k_{\ell}/2\rfloor-1}$  for $m\geq 2p+1$.
\\
(3). If all $k_i$'s are odd and $k_\ell=3$, then
$$\ex(m,n;F_{\ell})=\left\{
  \begin{array}{ll}
    pn+m-p, & \hbox{for $k_1=\cdots=k_\ell=3$;} \\
     pn+1, & \hbox{otherwise,}
  \end{array}
\right.$$
Moreover, the unique extremal graphs are  the graph  obtained  from $K_{p,n}$ by joining $m-p$ independent edges connecting  new $m-p$ isolated vertices to $m-p$ vertices with degree $p$ in $K_{p,n}$ respectively for $k_1=\cdots=k_\ell=3$,  and $Z_{p+1,n}^{p}\cup \overline{K}_{m-p-1}$ otherwise.
\\
(4). If all $k_i$'s are odd and $k_\ell=5$, then
$$
      \ex(m,n;F_\ell)=pn +m-p.
$$
Moreover, the unique extremal graph is $Z_{m,n}^{p}$  for $\ell \geq 2$.
\\
(5). If all $k_i$'s are odd and $k_\ell=7$, then
$$\ex(m,n;F_{\ell})=\left\{
  \begin{array}{ll}
    pn+m-p, & \hbox{for $p+1\leq m\leq 3 p$;} \\
     p\left(n-\left\lfloor k_\ell/2 \right\rfloor+1\right)
+\left(m-p\right)\left(\left\lfloor k_\ell/2 \right\rfloor-1\right), & \hbox{for $m\geq 3 p+1$,}
  \end{array}
\right.$$
Moreover, for $\ell \geq 2$, the extremal graphs are $Z_{m,n}^{p}$  for $p+1\leq m\leq 3p$ and $K_{p, n-\lfloor k_{\ell}/2\rfloor+1}\cup K_{m-p,\lfloor k_{\ell}/2\rfloor-1}$  for $m\geq 3p+1$.
\end{Theorem}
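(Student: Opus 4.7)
The plan is to argue by induction on $\ell$, with Theorem 1.2 (summarized by Corollary 1) serving as the base case $\ell=1$; the five sub-cases of the theorem mirror the five sub-cases of the bipartite path theorem. The guiding combinatorial observation is that an embedding of $F_\ell$ into a bipartite graph $(A,B)$ uses $p+1=\sum_{i=1}^\ell \lfloor k_i/2\rfloor$ vertices as the ``short sides'' of the individual paths, while each proposed extremal graph contains at most $p$ vertices that can play this role on either side of each component.

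For the lower bound I would verify $F_\ell$-freeness of each listed graph by checking every way of distributing the $\ell$ paths among the components. In $K_{p,n}\cup \overline{K}_{m-p}$, the unique non-trivial component has only $p$ short-side vertices, one short of what $F_\ell$ requires. In $K_{p,i}\cup K_{p,n-i}$ with $m=2p$ and $i\le \lfloor k_\ell/2\rfloor-1$, any allocation of the $\ell$ paths between the two pieces forces the piece of size $p+i$ to host $P_{k_\ell}$, but it has fewer than $\lceil k_\ell/2\rceil$ vertices on the long side. The graph $K_{p,n-\lfloor k_\ell/2\rfloor+1}\cup K_{m-p,\lfloor k_\ell/2\rfloor-1}$ is handled analogously, while the double stars $Z_{m,n}^p$ in the odd cases account for the extra ``$+m-p$'' or ``$+1$'' terms by contributing additional edges of the correct parity that no copy of $F_\ell$ can exploit.

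For the upper bound, let $G=(X,Y;E)$ be an $F_\ell$-free bipartite graph with $n$ large. The main structural step is to find a set $S\subseteq V(G)$ with $|S|\le p$ such that $G-S$ is $P_{k_\ell}$-free. The intuition: if after removing any $p$ vertices one still finds a copy of $P_{k_\ell}$, then a greedy extension argument -- using that $n$ is large compared to $\sum k_i$ -- builds $F_{\ell-1}$ disjoint from this $P_{k_\ell}$ by repeatedly selecting short subpaths from untouched neighborhoods, which together give $F_\ell\subseteq G$. With such an $S$ in hand, Corollary 1 controls $e(G-S)\le \max\{m-|S\cap X|,\,(\lfloor k_\ell/2\rfloor-1)(m+n-1)\}$, while edges incident to $S$ contribute at most $|S|\cdot n \le pn$. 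Summing the two bounds, and in the odd cases tracking the additive term produced by parts (2), (3) and (4) of Theorem 1.2 with care, matches the claimed edge count in each case. The equality analysis then forces $S$ to be a set of $p$ vertices fully joined to the rest, and the edges outside $S$ to arrange into the residual structure described in each case.

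I expect the main obstacle to lie in the parity-sensitive sub-cases (3), (4) and (5), where $k_\ell\in\{3,5,7\}$. There Corollary 1 yields only the weaker bound $pn+m-p$ rather than $(\lfloor k_\ell/2\rfloor-1)(m+n-O(1))$, and the additive constants of Theorem 1.2 drive both the numerical value and the unique extremal graph. In these cases I anticipate needing ad hoc extension arguments centered on $Z_{m,n}^p$: roughly, any additional edge must be ruled out by explicitly extending an existing copy of $F_\ell$ minus one edge into a full $F_\ell$. A secondary difficulty is the characterisation at the boundary $m=2p$ of case (1), where the entire family $K_{p,i}\cup K_{p,n-i}$ with $0\le i\le \lfloor k_\ell/2\rfloor-1$ is simultaneously extremal and each value of $i$ must be separately justified via the uniqueness clause of the equality analysis.
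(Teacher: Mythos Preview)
Your induction-on-$\ell$ framework and the lower-bound verifications are in line with the paper. The serious gap is in the upper bound for $m\ge 2p+1$. Your proposed decomposition
\[
e(G)\ \le\ |S|\cdot n + e(G-S)\ \le\ pn + \bigl(\lfloor k_\ell/2\rfloor-1\bigr)(m+n-1)
\]
overshoots the target $p(n-\lfloor k_\ell/2\rfloor+1)+(m-p)(\lfloor k_\ell/2\rfloor-1)=pn+(\lfloor k_\ell/2\rfloor-1)(m-2p)$ by $(\lfloor k_\ell/2\rfloor-1)(n+2p-1)$, which is linear in $n$ whenever $k_\ell\ge4$. So ``summing the two bounds'' does not recover the stated value, let alone identify the extremal graphs. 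The loss has two sources: (i)~you allow $|S|$ up to $p$, whereas for the $n$-coefficients to match you need $S\subseteq A$ with $|S|=s_{\ell-1}:=p-\lfloor k_\ell/2\rfloor+1$; (ii)~even with $|S|=s_{\ell-1}$ the naive sum is still off by the positive constant $(\lfloor k_\ell/2\rfloor-1)(s_\ell+\lfloor k_\ell/2\rfloor-3)$, and closing that constant gap is exactly where the hard work lies.

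The paper handles~(i) not via your contrapositive ``if no good $S$ exists, greedily build $F_\ell$'' (which, as written, is circular: you need an $F_{\ell-1}$ disjoint from a \emph{given} $P_{k_\ell}$, and merely having large $n$ does not supply one), but by first invoking the induction hypothesis to locate a copy of $F_{\ell-1}$ in $G$, and then applying a pigeonhole lemma (their Lemma~2.3) to the edges between $A\setminus A_0$ and $B$ to extract a set $X\subset A$ of size exactly $s_{\ell-1}$ with $\Omega(n)$ \emph{common} neighbours in $B$. This common-neighbourhood structure, not just the existence of some deletion set, is the key input you are missing. For~(ii), once $X$ is in hand the paper shows $|N[X]|\ge n-O(1)$, and then in the main case $k_\ell\ge6$ invokes Jackson's theorem on long cycles in bipartite graphs (their Theorem~2.4) to find inside $G-X$ a cycle $C$ of length $2\lfloor k_\ell/2\rfloor-2$; the $\lfloor k_\ell/2\rfloor-1$ vertices $A_C\subset A$ on this cycle are then shown to have nearly full degree as well, so that $X\cup A_C$ is a set of $p$ vertices whose common neighbourhood is almost all of $B$. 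Only after this second enlargement can one argue that vertices of $A\setminus(X\cup A_C)$ contribute essentially nothing into that common neighbourhood, which finally yields the exact bound and pins down the extremal graph. None of this mechanism appears in your sketch, and your anticipated difficulties are misplaced: the delicate part is not the small-$k_\ell$ parity cases but the generic case $k_\ell\ge 6$, where the crude deletion bound is genuinely too weak.
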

The proof of Theorem ~\ref{linear forest in bipartite graph} will be given in Section 3.
%
\subsection{The spectral Tur\'{a}n extremal problem}

The \emph{adjacency matrix}
$A(G)$ of $G$  is the $n\times n$ matrix $(a_{ij})$, where
$a_{ij}=1$ if $v_i$ is adjacent to $v_j$, and $0$ otherwise. The  \emph{spectral radius} of $G$, denoted by $\lambda(G)$, is the largest eigenvalue of $A(G)$,  while  the least eigenvalue of $A(G)$ is denoted by $\lambda_n(G)$.
The spectral radius of a graph may at times give some valuable information about the structure of graphs.
For example, it is well-known that $\lambda(G)$ is located between the average degree and the maximum degree of $G$, and the
chromatic number of a graph $G$ is at most $\lambda(G) + 1$.

 The spectral version of the Tur\'{a}n problem may be stated as follows: How large is the spectral radius  $\lambda(G)$ of an $H$-free graph $G$ of order $n$?
 Nikiforov \cite{Nikiforov2011} gave an excellent survey for the spectral Tur\'{a}n problem.

 In particular, Nikiforov \cite{Nikiforov2010} in 2010 determined the maximum spectral radius of a $P_k$-free graph of order $n$, which is spectral counterpart of Theorem~\ref{Thm1.1}.

 \begin{Theorem}(Nikiforov \cite{Nikiforov2010})
Let $G $ be a $P_{k}$-free  graph of order $n\geq2^{4k}$, where $k \geq 1$ and $ p'= \lfloor k/2\rfloor-1$. Then the following holds.\\
(1) If $k$ is even, then $\lambda(G) \leq \lambda(K_{p'}\nabla \overline{K}_{n-p'})$ and equality holds if and only if $G= K_{p'}\nabla \overline{K}_{n-p'}$.\\
(2) If  $k$ is odd, then $\lambda(G) \leq \lambda(K_{p'}\nabla (\overline{K}_{n-p'-2}\cup K_2))$ and equality holds if and only if $G= K_{p'}\nabla (\overline{K}_{n-p'-2}\cup K_2)$.
\end{Theorem}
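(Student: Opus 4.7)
The plan is to combine the Erdős--Gallai edge bound (Theorem~\ref{Thm1.1}) with a spectral inequality relating $\lambda(G)^2$ to a weighted walk count, and then to use a stability argument driven by the Perron eigenvector to pin down the extremal structure.

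Let $G$ be a $P_k$-free graph on $n\geq 2^{4k}$ vertices that maximizes $\lambda(G)$, and let $x$ denote its unit Perron eigenvector, with $u$ the vertex where $x$ attains its maximum. Since $P_k$-freeness is hereditary and $\lambda(G)$ is attained on a single connected component, I would first assume $G$ is connected. As benchmarks one computes, via the obvious equitable partitions, that $\lambda(K_{p'}\nabla\overline{K}_{n-p'})$ is the larger root of $t^2-(p'-1)t-p'(n-p')=0$, and that the odd-case candidate $K_{p'}\nabla(\overline{K}_{n-p'-2}\cup K_2)$ has strictly larger spectral radius by a lower-order term; these two cases of the theorem should thus agree on the quantitative bound up to $o(1)$ and diverge only in the final structural step.

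Next, I would derive the upper bound on $\lambda(G)$. The key spectral tool is the Hofmeister-type inequality $\lambda(G)^2\leq\max_{v}\sum_{w\in N(v)}d(w)$, obtained by evaluating $(A^2x)_u$ at the coordinate $u$ where $x$ is maximal. Combining this with Erdős--Gallai (Theorem~\ref{Thm1.1}), which gives $e(G)\leq(k-2)n/2$, together with a careful walk-counting estimate, shows $\lambda(G)^2\leq p'(n-p')+O_k(1)$ unless a small set of vertices carries a disproportionate share of the edges. Concretely, setting $S:=\{v:d(v)\geq n/2\}$, the estimate forces $|S|=p'$, with every vertex of $S$ having degree $n-O_k(1)$ and $\sum_{v\notin S}d(v)=O_k(n)$. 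With the would-be apex set $S$ isolated, I would upgrade spectral maximality to a pointwise adjacency statement: if some $s\in S$ fails to be joined to some $v\in V(G)\setminus S$, then adding $sv$ keeps $G$ a $P_k$-free graph, because any $P_k$ using $sv$ reroutes through the remaining $p'-1$ apex vertices to produce a $P_k$ already in $G$, contradicting maximality of $\lambda(G)$. Hence $G$ contains $K_{p'}\nabla\overline{K}_{n-p'}$ as a spanning subgraph, and $P_k$-freeness together with edge-maximality forces $G=K_{p'}\nabla\overline{K}_{n-p'}$ when $k$ is even, or $G[V\setminus S]$ to consist of exactly one edge when $k$ is odd.

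The main obstacle is the quantitative stability step: turning the combined Erdős--Gallai/Hofmeister estimate into the exact statement $|S|=p'$ with near-complete degrees requires tightly controlling the Perron entries of low-degree vertices, and this is where the hypothesis $n\geq 2^{4k}$ is used to absorb error terms arising from vertices outside $S$. The odd-$k$ refinement demands an additional check that precisely one extra $K_2$ fits inside $\overline{K}_{n-p'}$ (a direct longest-path check, alternating between apex and non-apex vertices, confirms $P_k\not\subset K_{p'}\nabla(\overline{K}_{n-p'-2}\cup K_2)$) and is mandatory, since leaving $V(G)\setminus S$ independent would yield $\lambda(G)<\lambda(K_{p'}\nabla(\overline{K}_{n-p'-2}\cup K_2))$, contradicting maximality.
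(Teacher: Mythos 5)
This statement is not proved in the paper at all: it is quoted verbatim from Nikiforov's 2010 article and used only as background, so there is no internal proof to measure your attempt against. The closest analogue inside the paper is Lemma~\ref{Lem3.1} (the stability argument for the bipartite linear-forest case), whose template your sketch broadly resembles. Judged on its own merits, however, your proposal has a genuine gap at its quantitative core. The Hofmeister-type bound $\lambda(G)^2\le\max_v\sum_{w\in N(v)}d(w)$ combined with Erd\H{o}s--Gallai only yields $\lambda(G)^2\le\max_v\sum_{w\in N(v)}d(w)\le 2e(G)\le (k-2)n\approx 2p'n$, which is roughly \emph{twice} the target $p'(n-p')$; indeed, evaluating $\sum_{w\in N(v)}d(w)$ at an apex vertex of the extremal graph $K_{p'}\nabla\overline{K}_{n-p'}$ itself already gives about $2p'n$, so no refinement of this inequality alone can produce the sharp bound. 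The ``careful walk-counting estimate'' you invoke to bridge this factor of two, and to force $|S|=p'$ with degrees $n-O_k(1)$ (note that $p'+1$ vertices of degree merely $n/2$ need not have a common neighbour, so $|S|=p'$ does not follow from $P_k$-freeness at that threshold), is precisely the hard part of Nikiforov's proof and is nowhere supplied.

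A secondary but real problem is the local modification step. The claim that adding a missing edge $sv$ ($s\in S$, $v\notin S$) preserves $P_k$-freeness ``because any $P_k$ using $sv$ reroutes through the remaining $p'-1$ apex vertices'' is not a proof: the rerouting must avoid the vertices already on the path, and all remaining apexes may lie on it. The argument can be repaired only after one has \emph{already} shown that $G-S$ is essentially edgeless (for $k$ even) or a matching (for $k$ odd), which is downstream of the unproved quantitative step. The standard correct move --- the one this paper actually uses in Claim~6 of Lemma~\ref{Lem3.1} --- is not to add a single edge but to delete all edges at a deficient vertex $v$, rejoin $v$ to the entire apex set (so that $v$ becomes a twin of the independent vertices and the forbidden subgraph demonstrably cannot appear), and then compare Rayleigh quotients using a lower bound on the Perron entries as in Lemma~\ref{spec1}. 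Your final structural step for odd $k$ (exactly one extra edge fits in the independent part) is correct in spirit, but everything before it rests on the two unsupported steps above.
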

  For linear forest, Chen, Liu, and Zhang  \cite{Chen2019} in 2019 determined the  maximum spectral  radius of  an $F_{\ell}$-free graph of order $n$ and characterized all the extremal graphs,  which is the spectral counterpart of Theorems~\ref{Thm1.3} also.

  If the host graphs are complete bipartite graphs, then the problem of determining the maximum spectral radius of an $H$-free bipartite graphs is called the bipartite spectral Tur\'{a}n problem or the spectral Zarankiewicz problem (see \cite{babai2009}). In 2015, Zhai, Lin, and Gong \cite{Zhai2015} obtained the maximum spectral radius of a $P_k$-free bipartite graph and  characterized all the extremal graphs, which is also the spectral counterpart of Theorem~\ref{Thm1.2}. 

Motivated by the above results, we can employ Theorem~\ref{linear forest in bipartite graph} to determine the   maximum spectral radius of $F_{\ell}$-free bipartite graphs  and  characterize all the extremal graphs. As an application, we determine the  minimum least eigenvalue $\lambda_n(G)$ of an $F_{\ell}$-free bipartite graph and  characterize all the extremal graphs. The second main result of this paper can be stated as follows.

 \begin{Theorem}\label{linear forest}
  Let $F_{\ell}=\bigcup_{i=1}^{\ell} P_{k_i}$ be a linear forest with   ${\ell}\geq 1$,
$k_1\geq \cdots \geq k_{\ell}\geq2$ and $ p=\sum _{i=1}^{\ell} \lfloor k_i/2 \rfloor-1\ge 1$.
If $G$ is an $F_{\ell}$-free bipartite graph of sufficiently large order $n$ with comparing to $p
 $, then
$$\lambda(G)\leq \sqrt{p(n-p)}$$ with equality if and only if $G=K_{p,n-p}$.
\end{Theorem}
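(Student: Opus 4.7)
Plan. Let $G$ be an $F_\ell$-free bipartite graph on $n$ vertices with bipartition $X \cup Y$, and set $a := |X| \leq |Y| =: b$, so $a + b = n$. Throughout I shall use the standard bipartite inequality $\lambda(H)^{2} \leq e(H)$, which follows from the symmetry of the spectrum around $0$ (so $\operatorname{tr}(A(H)^{2}) = 2 e(H)$) and in which equality holds if and only if $H$ has a unique positive eigenvalue, i.e.\ $H$ is a complete bipartite graph together with (possibly zero) isolated vertices. The proof splits on whether $a \leq p$ or $a \geq p+1$. If $a \leq p$, then $G \subseteq K_{a,b}$, so $\lambda(G) \leq \sqrt{a(n-a)}$; and since $n$ is large enough that $p < n/2$, the monotonicity of $t(n-t)$ on $[0, n/2]$ gives $a(n-a) \leq p(n-p)$, with equality forcing $a = p$ and $G = K_{p, n-p}$.

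If $a \geq p + 1$, the strategy is to show $\lambda(G) < \sqrt{p(n-p)}$ strictly. Because $b = n - a$ is large compared to $p$ and $a$ (as $n$ is taken large relative to $p$), Theorem \ref{linear forest in bipartite graph} applies and yields $e(G) \leq \ex(a, b; F_\ell)$. A case-by-case analysis over the five cases of Theorem \ref{linear forest in bipartite graph} then establishes
\[
\ex(a, b; F_\ell) \leq p(n - p),
\]
with strict inequality outside of a small handful of borderline sub-cases (namely $p = 1$ with $F_\ell = 2P_3$ or $F_\ell = P_5$), in which the unique bipartite extremal graph is not a complete bipartite graph together with isolated vertices; in those borderline sub-cases, the equality clause in $\lambda(G)^{2} \leq e(G)$ fails, so $\lambda(G)^{2} < e(G) = p(n-p)$. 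Combining the two cases yields $\lambda(G) \leq \sqrt{p(n-p)}$, with equality only in the sub-case $G = K_{p, n-p}$ of the first case.

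The main obstacle is the case analysis in the second case. Writing $q := \lfloor k_\ell/2 \rfloor - 1$, the difference $p(n-p) - \ex(a, b; F_\ell)$ reduces to $p(a - p)$ in case (1) with $p + 1 \leq a \leq 2p$, to $(p-1)(a - p)$ in cases (2), (3a), (4), and in the regime $p + 1 \leq a \leq 3p$ of case (5), to $p(a-p) - 1$ in case (3b), and to $p(a-p) + q(2p - a)$ in the upper $a$-ranges of cases (1), (2), and (5). Under the standing constraints $a \geq p + 1$ and $q \leq p$ (with $q \leq p - 1$ for $\ell \geq 2$), each of these is non-negative, and strictly positive except in the two borderline $p = 1$ sub-cases above; the combinatorial structure of $F_\ell$ rules out $p = 1$ in cases (2), (3b), and (5). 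In the two remaining borderline cases, the unique bipartite extremal graph has biadjacency matrix of rank at least $2$, so the equality condition of $\lambda(G)^2 \leq e(G)$ is violated, and the strict inequality $\lambda(G) < \sqrt{p(n-p)}$ is recovered.
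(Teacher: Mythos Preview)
Your argument has a genuine gap in the case $a\ge p+1$. You write that ``$b=n-a$ is large compared to $p$ and $a$ (as $n$ is taken large relative to $p$)'', but this is false in general: the hypothesis of Theorem~\ref{linear forest in bipartite graph} requires the larger class to be sufficiently large \emph{relative to the smaller class}, and nothing in the statement of Theorem~\ref{linear forest} bounds $a$ in terms of $p$. A connected $F_\ell$-free bipartite graph can perfectly well have $a=b=n/2$, and then Theorem~\ref{linear forest in bipartite graph} simply does not apply. Your subsequent case analysis therefore only covers the regime where $a$ is bounded (so that $b=n-a$ really is much larger than $a$), and leaves the nearly-balanced regime untreated. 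It is not obvious how to close this gap with edge-counting alone: the general Tur\'an bound $\ex(n,F_\ell)=\binom{p}{2}+p(n-p)+c$ from Theorem~\ref{Thm1.3} exceeds $p(n-p)$, so the inequality $\lambda(G)^2\le e(G)$ is not sharp enough by itself, and one would need an extension of Theorem~\ref{linear forest in bipartite graph} to arbitrary class sizes.

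The paper sidesteps this difficulty entirely. Rather than bounding edges, it proves a structural lemma (Lemma~\ref{Lem3.1}) via eigenvector analysis: assuming $\lambda(G)\ge\sqrt{p(n-p)}$ for a connected $F_\ell$-free graph, one shows directly that $K_{p,n-p}$ is a spanning subgraph, which for bipartite $G$ forces $G=K_{p,n-p}$. This argument never needs to know the bipartition sizes and so never runs into the balanced-case obstruction. Your approach would be attractively elementary if Theorem~\ref{linear forest in bipartite graph} held uniformly in $a,b$, but as stated the proof is incomplete.
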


\begin{Corollary}\label{Cor}
  Let $F_{\ell}=\bigcup_{i=1}^{\ell} P_{k_i}$ be a linear forest with   ${\ell}\geq 1$,
$k_1\geq \cdots \geq k_{\ell}\geq2$ and $ p=\sum _{i=1}^{\ell} \lfloor k_i/2 \rfloor-1\ge 1$.
 If $G$ is an $F_{\ell}$-free  graph of sufficiently large order $n$ with comparing to $p$, then
$$\lambda_n(G)\geq- \sqrt{p(n-p)}$$ with equality if and only if $G=K_{p,n-p}$.
\end{Corollary}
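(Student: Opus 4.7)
The plan is to reduce the least-eigenvalue bound for general $F_\ell$-free graphs to the spectral-radius bound for bipartite $F_\ell$-free graphs provided by Theorem~\ref{linear forest}, via the standard sign-partition trick.

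First I would let $\mathbf{x}$ be a unit eigenvector of $A(G)$ for $\lambda_n(G)$ and partition $V(G) = X^+ \cup X^- \cup X^0$ according to whether the entries of $\mathbf{x}$ are positive, negative, or zero. Let $B$ be the bipartite spanning subgraph of $G$ whose edge set consists of exactly those edges of $G$ with one endpoint in $X^+$ and the other in $X^-$, and set $\mathbf{y}_u = |\mathbf{x}_u|$ for each $u \in V(G)$. Splitting $\mathbf{x}^{T} A(G) \mathbf{x}$ according to whether the two endpoints of an edge lie in the same sign class or in different sign classes, the same-sign contributions $\mathbf{x}_u \mathbf{x}_v$ are nonnegative, while the opposite-sign contributions satisfy $\mathbf{x}_u \mathbf{x}_v = -\mathbf{y}_u \mathbf{y}_v$. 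Therefore
\[
-\lambda_n(G) \;=\; -\mathbf{x}^{T} A(G) \mathbf{x} \;\leq\; \mathbf{y}^{T} A(B) \mathbf{y} \;\leq\; \lambda(B),
\]
where the last inequality uses $\|\mathbf{y}\|=\|\mathbf{x}\|=1$ and the Rayleigh characterization of $\lambda(B)$.

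Next, $B$ is a spanning bipartite subgraph of the $F_\ell$-free graph $G$, hence is itself bipartite and $F_\ell$-free on $n$ vertices, so Theorem~\ref{linear forest} applies and gives $\lambda(B) \leq \sqrt{p(n-p)}$. Combining this with the display above yields $\lambda_n(G) \geq -\sqrt{p(n-p)}$. To see the bound is tight, observe that $K_{p,n-p}$ is itself $F_\ell$-free (every bipartite embedding of $F_\ell$ uses at least $\sum_i \lfloor k_i/2\rfloor = p+1$ vertices in each part), while its spectrum is $\{\pm\sqrt{p(n-p)},\,0^{(n-2)}\}$.

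For the equality characterization I would trace equality back through each step. If $\lambda_n(G) = -\sqrt{p(n-p)}$, then $\lambda(B) = \sqrt{p(n-p)}$, so the equality case of Theorem~\ref{linear forest} forces $B = K_{p,n-p}$ as a graph on $n$ vertices; in particular $X^0 = \emptyset$ and $\{|X^+|,|X^-|\} = \{p, n-p\}$, and $\mathbf{y}$ must be the (strictly positive) Perron eigenvector of $K_{p,n-p}$. Equality in $-\mathbf{x}^{T}A(G)\mathbf{x} \leq \mathbf{y}^{T}A(B)\mathbf{y}$ then forces the sum of $\mathbf{x}_u \mathbf{x}_v$ over edges of $G$ with both endpoints in $X^+$ (respectively in $X^-$) to vanish; since $\mathbf{y}$ is strictly positive, no such edges can exist, so $G = B = K_{p,n-p}$. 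The main content of the argument is really packaged in Theorem~\ref{linear forest}; the only obstacle here is the bookkeeping of the equality conditions in the Rayleigh-quotient reduction.
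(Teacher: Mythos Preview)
Your proof is correct and follows essentially the same approach as the paper. The paper cites a result of Favaron, Mah\'{e}o and Sacl\'{e} stating that $\lambda_n(G)\ge\lambda_n(H)$ for some spanning bipartite subgraph $H$, with equality iff $G=H$, then applies Theorem~\ref{linear forest} to $H$ together with the symmetry of bipartite spectra; your sign-partition argument is precisely the standard proof of that Favaron--Mah\'{e}o--Sacl\'{e} lemma, carried out inline rather than quoted as a black box.
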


The rest of this paper is organized as follows. In Section~2, we present some known and necessary results.
In Section~3, we give the proof of Theorems~\ref{linear forest in bipartite graph}.
In Section~4, we  give the proofs of Theorems \ref{linear forest} and Corollary~\ref{Cor}.

\section{Some Lemmas}

\begin{Lemma}\label{lemma for P7 1}
Let $p\geq 3$ and $m_1\leq 2p$. Then $\ex(n_1,m_1;P_7)\leq  pn_1+m_1$ with equality if and only if $n_1=m_1=0$, or $m_1=2p$ and $n_1=2$.
\end{Lemma}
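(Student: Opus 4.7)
The plan is to derive the bound from Theorem~\ref{Thm1.2} applied with $k=7$ (so that the parameter $p'$ there equals $2$) and to verify, case by case, that the closed-form value of $\ex(n_1,m_1;P_7)$ is at most $pn_1+m_1$, while keeping track of when equality can be attained.

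First I would dispose of the degenerate range $\min(n_1,m_1)\le 2$, where Theorem~\ref{Thm1.2} does not directly apply. Since $P_7$ has three vertices on each side of any bipartition, a bipartite graph with a part of size at most $2$ is automatically $P_7$-free, so $e(G)\le n_1 m_1$. Comparing $n_1 m_1$ with $pn_1+m_1$ using $p\ge 3$ and $m_1\le 2p$ yields strict inequality in all subcases except two boundary ones: the trivial $n_1=m_1=0$ and the case $n_1=2$, $m_1=2p$, where $2m_1\le 2p+m_1$ is tight, forcing $G=K_{2,2p}$. Note that $K_{2,2p}$ is indeed $P_7$-free, again because one of its parts has size only $2$.

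Second, in the main range $n_1,m_1\ge 3$ I would invoke Theorem~\ref{Thm1.2}(4) with the two sides ordered appropriately and $p'=2$. This produces four subcases driven by whether both sides equal $3$, both equal $6$, the larger side is at least $7$ (or equals the threshold $6$ while the other is strictly larger), or the remaining ``double-star'' case. The corresponding values of $\ex(n_1,m_1;P_7)$ are $9$, $18$, $2(n_1+m_1-4)$, and $2\max(n_1,m_1)+\min(n_1,m_1)-2$. For each, the inequality $\ex(n_1,m_1;P_7)\le pn_1+m_1$ reduces to an elementary estimate that holds strictly thanks to $p\ge 3$ and $m_1\le 2p$; for instance the third case rearranges to $m_1-8\le(p-2)n_1$, which is strict because the case hypothesis already forces $n_1\ge 6$. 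Hence this range contributes no new equality cases.

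Combining the two parts produces the claimed upper bound together with precisely the two stated equality cases $n_1=m_1=0$ and $(n_1,m_1)=(2,2p)$ with $G=K_{2,2p}$. The main obstacle is essentially bookkeeping: Theorem~\ref{Thm1.2}(4) is a somewhat intricate four-case statement, and the asymmetry between $n_1$ and $m_1$ in the target bound $pn_1+m_1$ forces one to consider the orderings $n_1\le m_1$ and $n_1\ge m_1$ separately before applying Theorem~\ref{Thm1.2}, which assumes a fixed ordering of the two sides.
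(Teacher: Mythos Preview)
Your proposal is correct and follows essentially the same case analysis as the paper's own proof: first dispose of the range $\min(n_1,m_1)\le 2$ via the trivial bound $\ex(n_1,m_1;P_7)=n_1m_1$, then for $n_1,m_1\ge 3$ invoke Theorem~\ref{Thm1.2}(4) with $p'=2$ and check each of the four resulting formulae against $pn_1+m_1$. Your identification of the ``otherwise'' value as $2\max(n_1,m_1)+\min(n_1,m_1)-2$ is in fact the correct reading of Theorem~\ref{Thm1.2}(4), and the minor wording slip in your description of the third subcase (it is the \emph{smaller} side, not the larger, that must be at least $7$ or equal to $6$) does not affect the argument.
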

\begin{proof}
If $n_1\leq 2$ or $m_1\leq 2$, then $\ex(n_1,m_1;P_7)=n_1m_1\leq pn_1+m_1$ and equality holds if and only if $n_1=m_1=0$, or $m_1=2p$ and $n_1=2$.
Let $n_1\geq 3$ and $m_1\geq 3$.
If $m_1=n_1=3$, then $\ex(3,3;P_7)= 9<3p+3$.
If $m_1=n_1=6$, then $\ex(6,6;P_7)= 18<6p+6$.
If $\min\{m_1,n_1\}\geq 6$ and $\max\{m_1,n_1\}>6$, then  $\ex(n_1,m_1;P_7)= 2(m_1+n_1-4)=pn_1+m_1  +m_1+(2-p)n_1-8  \leq pn_1+m_1$.
If $\min\{m_1,n_1\}\leq 5$ and $(m_1,n_1)\neq (3,3)$, then $\ex(n_1,m_1;P_7)= m_1+n_1+\min\{m_1,n_1\}-2= pn_1+m_1+(1-p)n_1+  \min\{m_1,n_1\}      -2 <  pn_1+m_1$.
The lemma is proved.
\end{proof}

\begin{Lemma}\label{lemma for P7 2}
Let $p\geq 3$, $m\geq 3p+1$ and $m_1\leq m-p$. Then $\ex(n_1,m_1;P_7)\leq p(n_1-2)+m-p+m_1$ with equality if and only if $n_1=2$ and $m_1=m-p$.
\end{Lemma}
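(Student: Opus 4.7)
The target bound rewrites as $pn_1 + m_1 + (m - 3p)$ and so exceeds the bound of Lemma~\ref{lemma for P7 1} by the positive slack $m - 3p \ge 1$. My plan is to split on the size of $m_1$ and, in the large range, on $n_1$.

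\emph{Case 1: $m_1 \le 2p$.} Lemma~\ref{lemma for P7 1} gives $\ex(n_1,m_1;P_7) \le pn_1 + m_1 < pn_1 + m_1 + (m - 3p)$, which is the target bound, strictly. The putative equality configuration $m_1 = m - p$ satisfies $m_1 \ge 2p + 1$, so it falls outside this case.

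\emph{Case 2: $m_1 \ge 2p + 1$.} If $n_1 \le 2$, then every path in a bipartite graph with parts of sizes $n_1, m_1$ has at most $2n_1 + 1 \le 5$ vertices, so $P_7$ is absent and $\ex(n_1,m_1;P_7) = n_1 m_1$. Comparing directly against the target bound and using $m_1 \le m - p$ and $m \ge 3p + 1$, one gets strict inequality for $n_1 \in \{0,1\}$ and equality for $n_1 = 2$ precisely when $m_1 = m - p$; the unique extremal graph is then $K_{2, m-p}$. If instead $n_1 \ge 3$, I invoke Theorem~\ref{Thm1.2}(4) for $P_7$ (so $p' = 2$): since $m_1 \ge 7$, we land in case (c) or (d) of that theorem, giving $\ex(n_1, m_1; P_7) = 2(n_1 + m_1 - 4)$ when $n_1 \ge 6$ and $\ex(n_1, m_1; P_7) = 2 m_1 + n_1 - 2$ when $n_1 \in \{3, 4, 5\}$. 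Substituting $m_1 \le m - p$ into the target reduces each subcase to a linear inequality in $n_1$ with positive coefficient ($p - 2$ in the first and $p - 1$ in the second), which holds strictly for $n_1 \ge 3$ and $p \ge 3$.

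The main obstacle is simply keeping the case split organised; the slack $m - 3p \ge 1$ coming from $m \ge 3p + 1$ (rather than just $m \ge 3p$) is exactly what is needed to absorb the small constants in the various subcases and to force strict inequality everywhere except at the single equality configuration $n_1 = 2$, $m_1 = m - p$, realised by $K_{2,m-p}$.
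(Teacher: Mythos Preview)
Your proof is correct and follows essentially the same approach as the paper: both arguments are direct case checks, plugging the explicit values of $\ex(\cdot,\cdot;P_7)$ from Theorem~\ref{Thm1.2}(4) into the target inequality. Your organisation differs only in that you first dispose of the range $m_1\le 2p$ by invoking Lemma~\ref{lemma for P7 1} (using the slack $m-3p\ge 1$), whereas the paper splits directly on $\min\{m_1,n_1\}$; this is a minor repackaging of the same computation.
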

\begin{proof}
If $n_1\leq 2$ or $m_1\leq 2$, then $\ex(n_1,m_1;P_7)=n_1m_1\leq  p(n_1-2)+m-p+m_1$ and the quality holds if and only if $n_1=2$ and $m-p=m_1$.
If $m_1=n_1=3$, then $\ex(3,3;P_7)= 9< m+m_1$.
If $m_1=n_1=6$, then $\ex(6,6;P_7)= 18< 3p+m+m_1$.
If $\min\{m_1,n_1\}\geq 6$ and $\max\{m_1,n_1\}>6$, then  $\ex(n_1,m_1;P_7)=2(m_1+n_1-4)= p(n_1-2)+m-p+m_1 +((2-p)n_1+3p-3m_1-m )< p(n_1-2)+m-p+m_1$.
If $\min\{m_1,n_1\}\leq 5$ and $(m_1,n_1)\neq (3,3)$, then $\ex(n_1,m_1;P_7)= m_1+n_1+\min\{m_1,n_1\}-2=p(n_1-2)+m-p+m_1+((1-p)n_1+3p-m-\min\{m_1,n_1\} -2)  <  p(n_1-2)+m-p+m_1 $.
The lemma is proved.
\end{proof}

\begin{Lemma}\label{large common neighbours}
Let $G=(A,B; E)$ be a  bipartite graph with $|A|=a$ and $ |B|=n$, where $a$ is a constant.  If $a\ge b$ and $e(G)\geq b n$ for sufficiently large $n$ with comparing to $a$, then there exists a vertex set $A^\prime\subseteq A$ with size $t=\lceil b\rceil$ and a constant $c>0$ such that the common neighbours of $A^\prime$ is at least $cn$.
\end{Lemma}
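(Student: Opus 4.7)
The plan is to apply a classical Kővári--Sós--Turán style double-counting to the set
\[
\mathcal{P}=\bigl\{(S,v):S\in\textstyle\binom{A}{t},\ v\in B,\ S\subseteq N_G(v)\bigr\}.
\]
Summing by the $B$-coordinate gives $|\mathcal{P}|=\sum_{v\in B}\binom{d(v)}{t}$, whereas summing by the $A$-coordinate gives $|\mathcal{P}|=\sum_{S\in\binom{A}{t}}|N_G[S]|$, using the paper's convention that $N_G[S]=\bigcap_{v\in S}N_G(v)$ is the common neighbourhood of $S$. So if I can prove a lower bound $\sum_{v\in B}\binom{d(v)}{t}\geq c'n$ for some constant $c'>0$ depending only on $a$ and $b$, then since there are only $\binom{a}{t}$ choices for $S$, an averaging argument produces some $A'\in\binom{A}{t}$ with $|N_G[A']|\geq (c'/\binom{a}{t})\,n$, which is the desired conclusion with $c=c'/\binom{a}{t}$.

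The lower bound on $\sum_{v\in B}\binom{d(v)}{t}$ will come from Jensen's inequality applied to a truncated version of the generalised binomial. Define $f:[0,\infty)\to[0,\infty)$ by $f(x)=0$ on $[0,t-1]$ and $f(x)=\binom{x}{t}=\frac{1}{t!}\prod_{i=0}^{t-1}(x-i)$ on $[t-1,\infty)$. Then $f$ is continuous, nondecreasing, and convex on $[0,\infty)$, and it satisfies $f(d)=\binom{d}{t}$ for every nonnegative integer $d$. Because $t=\lceil b\rceil$ forces $b>t-1$, the average degree $\bar d=e(G)/n\geq b$ lies in the region where $f$ agrees with the polynomial binomial, so Jensen's inequality gives
\[
\sum_{v\in B}\binom{d(v)}{t}=\sum_{v\in B}f(d(v))\geq n\,f(\bar d)\geq n\,\binom{b}{t},
\]
where $\binom{b}{t}>0$ because each factor $b-i$ for $0\leq i\leq t-1$ is strictly positive (the tightest being $b-(t-1)>0$). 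Taking $c:=\binom{b}{t}/\binom{a}{t}>0$ then finishes the proof, where $\binom{a}{t}$ is positive since $a\geq b$ forces $a\geq\lceil b\rceil=t$.

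The only mild obstacle is the convexity step: some individual degrees $d(v)$ may be smaller than $t-1$, and on $[0,t-1]$ the raw polynomial $\binom{x}{t}$ oscillates in sign, which would break a naive use of Jensen. The truncation $f$ is introduced precisely to sidestep this issue cleanly, without having to split the vertex set $B$ by degree. The hypothesis that $n$ is large compared with $a$ plays no substantive role in the argument itself; it serves only to guarantee that the produced quantity $cn$ is at least $1$, so that the conclusion is nonvacuous.
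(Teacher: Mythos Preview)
Your proof is correct. Both your argument and the paper's share the same skeleton: double-count pairs $(S,v)$ with $S\in\binom{A}{t}$ and $S\subseteq N_G(v)$, then average over the $\binom{a}{t}$ choices of $S$. The only difference is in how the lower bound on $|\mathcal{P}|=\sum_{v\in B}\binom{d(v)}{t}$ is obtained. The paper does a crude degree split: it shows that at least $\frac{b-t+1}{a-t+1}\,n$ vertices of $B$ have degree $\ge t$, each contributing at least one $t$-subset, and then pigeonholes, yielding $c=\frac{b-t+1}{(a-t+1)\binom{a}{t}}$. You instead apply Jensen to the truncated convex function $f$, obtaining $|\mathcal{P}|\ge n\binom{b}{t}$ and hence $c=\binom{b}{t}/\binom{a}{t}$.

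Your route is the standard K\H{o}v\'ari--S\'os--Tur\'an manoeuvre and gives a sharper constant; the paper's route is slightly more elementary in that it avoids any convexity check. Your handling of the convexity issue via truncation is clean and correct: the polynomial $\binom{x}{t}$ is indeed convex on $[t-1,\infty)$ (its logarithmic second derivative is $(\sum_j \frac{1}{x-j})^2-\sum_j\frac{1}{(x-j)^2}\ge 0$ there), and the kink at $x=t-1$ has nonnegative jump in the one-sided derivative, so the glued function is convex on $[0,\infty)$. Your observation that $t=\lceil b\rceil$ forces $b>t-1$, hence $\binom{b}{t}>0$, is exactly what makes the constant nondegenerate.
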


\begin{Proof}
Let $X$ be the set of vertices of $B$ with degree less than $t-1$. Since  $e(G)\geq b n$, we have $(t-1)|X|+(n-|X|)a\geq b n$. Then $|X|\leq \frac{a-b}{a-t+1}n$, which implies that  there are at least $n-|X|\geq \frac{b-t+1}{a-t+1}n $  vertices of $B$ with degree at least $t$. Note that there are ${a \choose t}$ $t$-sets in $A$. By the
pigeonhole principle, there exists a vertex set $A^\prime\subseteq A$ with size $t$ such that the common neighbours of $A$ is at least $cn$, where $c=  \frac{b-t+1}{(a-t+1){a \choose t}}>0.$ The proof is complete.


\end{Proof}

\medskip

We need the following result of  Jackson \cite{jackson1981}.

\begin{Theorem}[Jackson \cite{jackson1981}]\label{cycle}
Let $G=(A,B; E)$ be a bipartite  graph with $|A|=m$ and  $|B|=n$. If    each vertex of $B$ has degree at least $k$ and $m\leq (k-1) \lceil n/(k-1)\rceil$, then $G$ contains a cycle of length  at least $2k$ for sufficiently large $n$.
\end{Theorem}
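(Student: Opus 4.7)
The plan is to employ a rotation-extension argument on a longest path of $G$, in the spirit of P\'osa. First I would use the hypotheses to produce a sufficiently long path: since each vertex of $B$ has degree at least $k$ and $m \leq (k-1)\lceil n/(k-1)\rceil$ is only mildly larger than $n$, one can start from any vertex of $B$ and greedily extend, at each $B$-endpoint selecting an unvisited $A$-neighbor among the $\geq k$ available, and at each $A$-endpoint using a counting argument (facilitated by ``$n$ sufficiently large'') to find an unvisited $B$-neighbor. This produces a path of length much greater than $2k$.

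Next I would take a longest path $P = v_0 v_1 \cdots v_p$ in $G$ and assume for contradiction that $G$ contains no cycle of length $\geq 2k$. By maximality, all neighbors of each endpoint lie on $P$. If $v_0 \in B$, then $v_0$ has at least $k$ neighbors $v_{i_1}, \ldots, v_{i_k} \in A \cap V(P)$ with $i_1 < \cdots < i_k$. A P\'osa rotation at $v_{i_j}$ turns $v_{i_j - 1}$ (which lies in $B$) into a new longest-path endpoint, and iterating produces a large set $S \subseteq B$ of possible endpoints. The minimum degree condition applied to each $v \in S$ then forces a substantial number of edges from $S$ into $A \cap V(P)$.

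To close the argument, I would exploit the size of $S$ and the edge count from $S$ into $V(P)$: either two rotated endpoints share a common neighbor positioned so that a chord yields a cycle of length $\geq 2k$, or the combination $e(G) \geq kn$ together with $m \leq (k-1)\lceil n/(k-1)\rceil$ forces a dense substructure inside $V(P)$ from which a cycle of the required length is immediate. The numerical condition on $m$ is essentially what makes $e(G)/m$ close to $k$ on average, which is crucial for both the rotation and the chord-finding step.

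The main obstacle is the asymmetry of the hypotheses: only $B$ has a minimum degree bound, so rotations can bottleneck at low-degree vertices of $A$. The bound $m \leq (k-1)\lceil n/(k-1)\rceil$ is calibrated precisely to force the average $A$-degree to be close to $k$, making such bottlenecks rare; converting this average statement into a usable structural one---so that enough consecutive rotations survive to cover $\geq 2k$ positions on $P$---will be the most delicate step, and is the reason the asymptotic hypothesis ``$n$ sufficiently large'' is imposed.
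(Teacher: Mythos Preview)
This statement is not proved in the paper. It is quoted as a known result of Jackson \cite{jackson1981} and used as a black box in the proof of Theorem~\ref{linear forest in bipartite graph} (specifically in Subcase~4.2). There is therefore no ``paper's own proof'' against which to compare your proposal.

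As for the proposal itself: a P\'osa-type rotation argument is indeed the standard route to results of this kind, and Jackson's original argument is in that spirit. However, what you have written is a plan rather than a proof. The decisive step---showing that the degree hypothesis on $B$ together with the bound $m \le (k-1)\lceil n/(k-1)\rceil$ actually forces enough rotations to produce a cycle of length at least $2k$---is precisely the step you flag as ``the most delicate'' and leave undone. In particular, your remark that ``$e(G)/m$ is close to $k$ on average'' is not by itself enough: low-degree vertices in $A$ can absorb many rotations, and one needs a careful counting (or a structural dichotomy) to rule this out. If you intend to supply an independent proof rather than cite Jackson, that step must be filled in explicitly; otherwise, since the paper simply cites the result, you may do the same.
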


\begin{Lemma}\label{spec1}
 Let $F_{\ell}=\bigcup_{i=1}^{\ell} P_{k_i}$ be a linear forest with   ${\ell}\geq 1$,
$k_1\geq \cdots \geq k_{\ell}\geq2$ and $ p=\sum _{i=1}^{\ell} \lfloor k_i/2 \rfloor-1\ge 1$.
  Suppose that $G$ has the maximum spectral radius $\lambda$ among all $F_{\ell}$-free connected graphs of  order $n$.  If  $\mathbf x=(x_u)_{u\in V(G)}$ is a positive eigenvector with  maximum entry $1$ corresponding  to  $\lambda$ and $n$ is sufficiently large, then
 $x_u\geq \frac{1}{\lambda }$ for all $u\in V(G)$.
\end{Lemma}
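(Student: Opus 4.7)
The plan is to argue by contradiction: assume $x_u<1/\lambda$ for some $u$, and exhibit a connected $F_\ell$-free graph on $n$ vertices whose spectral radius strictly exceeds $\lambda$, contradicting the extremality of $G$. I would first record two basic ingredients. The graph $K_{p,n-p}$ is $F_\ell$-free (any embedding of $F_\ell$ into a bipartite graph requires at least $\sum_{i}\lfloor k_i/2\rfloor=p+1$ vertices on its smaller side, while $K_{p,n-p}$ offers only $p$) and connected, so $\lambda\geq\lambda(K_{p,n-p})=\sqrt{p(n-p)}$. Second, if $v^{*}\in V(G)$ is a vertex with $x_{v^{*}}=1$, then the eigenequation at $v^{*}$ gives $\lambda=\sum_{w\sim v^{*}}x_w\leq d_G(v^{*})$, so $d_G(v^{*})\geq\sqrt{p(n-p)}$, which for $n$ sufficiently large comfortably exceeds the constant $|V(F_\ell)|=\sum_i k_i$.

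Now suppose $x_u<1/\lambda$ for some $u\neq v^{*}$. If $u\sim v^{*}$, the eigenequation at $u$ immediately yields $\lambda x_u=\sum_{w\sim u}x_w\geq x_{v^{*}}=1$, contradicting $x_u<1/\lambda$. So I may assume $u\not\sim v^{*}$ and form $G'$ from $G$ by deleting every edge incident to $u$ and inserting the single edge $uv^{*}$. A direct Rayleigh-quotient computation gives
\[
\mathbf{x}^{T}A(G')\mathbf{x}-\mathbf{x}^{T}A(G)\mathbf{x}=-2x_u\!\sum_{w\sim_G u}\!x_w+2x_u x_{v^{*}}=2x_u\bigl(1-\lambda x_u\bigr)>0,
\]
hence $\lambda(G')\geq\mathbf{x}^{T}A(G')\mathbf{x}/\|\mathbf{x}\|^{2}>\lambda$.

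It remains to verify that $G'$ is $F_\ell$-free. Suppose $F_\ell\subseteq G'$. Since $G'$ and $G$ differ only at $u$ and $G$ is $F_\ell$-free, the copy of $F_\ell$ must use the new edge $uv^{*}$; because $u$ has degree $1$ in $G'$, it is an endpoint of the containing path, say $P_{k_j}=u\,v^{*}\,w_2\cdots w_{k_j-1}$. Then $F_\ell-u$ (a linear forest in which $P_{k_j}$ is shortened to $P_{k_j-1}$ and the other paths are untouched) lies in $G-u\subseteq G$. Because $d_G(v^{*})>|V(F_\ell)|$, there exists $z\in N_G(v^{*})\setminus V(F_\ell)$; then $z\,v^{*}\,w_2\cdots w_{k_j-1}$ is a $P_{k_j}$ in $G$ which, together with the untouched paths of $F_\ell-u$, exhibits an $F_\ell\subseteq G$, contradicting the $F_\ell$-freeness of $G$. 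Hence $G'$ is $F_\ell$-free, and the strict inequality $\lambda(G')>\lambda$ contradicts the extremality of $G$.

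The principal obstacle I foresee is verifying that $G'$ is connected, so as to be a genuine competitor of $G$. If $u$ happens to be a cut vertex of $G$, then $G-u$ is disconnected and the $G'$ above inherits this. To repair the argument I would refine the swap by additionally keeping, for each component of $G-u$ not containing $v^{*}$, a single edge from $u$ into that component; the Rayleigh-quotient gain remains positive under $x_u<1/\lambda$, and the $F_\ell$-freeness analysis gains only one further subcase, namely that $u$ is internal to $P_{k_j}$ in the alleged $F_\ell$, which is resolved by again invoking $d_G(v^{*})\gg|V(F_\ell)|$ to relocate $u$ to a fresh common neighbor of $v^{*}$ and $u$'s surviving $F_\ell$-neighbor.
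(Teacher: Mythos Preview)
Your core argument---the Rayleigh-quotient swap at a vertex $u$ with $x_u<1/\lambda$ and the verification that the modified graph is $F_\ell$-free using $d_G(v^{*})>|V(F_\ell)|$---is exactly the paper's proof. You actually go a step further than the paper: it simply writes $0\ge\lambda(G_1)-\lambda$ without checking that $G_1$ is connected, whereas you correctly flag this as an issue.

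Your proposed repair, however, does not work. In the refined $G'$ you keep one edge $us_i$ into each component of $G-u$ not containing $v^{*}$; if an alleged $F_\ell\subseteq G'$ uses the new edge $uv^{*}$ with $u$ internal to a path, the other $F_\ell$-neighbour of $u$ is some $s_i$, and you want to relocate $u$ to a common neighbour of $v^{*}$ and $s_i$ outside $V(F_\ell)$. But $v^{*}$ and $s_i$ lie in distinct components of $G-u$, so $N_G(v^{*})\cap N_G(s_i)=\emptyset$ and no such vertex exists. A clean fix is to keep your original (unrefined) $G'$: every component of $G'$ other than the one $C$ containing $v^{*}$ is a proper subgraph of the connected graph $G$ and therefore has spectral radius strictly below $\lambda$, so the Rayleigh gain forces $\lambda(C)>\lambda$. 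Now attach $n-|C|$ pendant vertices at $v^{*}$; the resulting graph is connected of order $n$, remains $F_\ell$-free (any new pendant used by a hypothetical $F_\ell$ is an endpoint and can be swapped for an unused vertex of $N_C(v^{*})$, since $d_C(v^{*})\ge d_G(v^{*})>|V(F_\ell)|$), and has spectral radius at least $\lambda(C)>\lambda$, giving the desired contradiction.
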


\begin{Proof}
Choose a vertex $w\in V(G)$ with $x_w=1$.
 Since $K_{p,n-p}$ is $F_{\ell}$-free, we have
$$\lambda\geq \lambda(K_{p,n-p})=\sqrt{p(n-p)}.$$
If $u=w$, then $x_u=1\geq \frac{1}{\lambda }$ and we are done. So  we  next suppose that $u\neq w$.
If $u$ is adjacent to $w$,
then by the eigenequation of $A(G)$ on $u$, $$ x_u=\frac{1}{\lambda}\sum_{uv\in E(G)}x_v\geq \frac{x_w}{\lambda}=\frac{1}{\lambda }.$$
If $u$ is not adjacent to $w$, then
let $G_1$ be a graph obtained from $G$ by deleting all edges incident with $u$ and adding an edge $uw$. Note that
$$d_G(w)\geq\sum_{vw\in E(G)}x_v=\lambda x_w=\lambda\geq \sqrt{p(n-p)},$$
which implies that $w$ is a vertex with sufficiently large degree with comparing to $p$.
Then $G_1$ must be $F_{\ell}$-free Since $uw$ is a pendant edge in $G_1$. Hence  \begin{eqnarray*}
  0 &\geq& \lambda(G_1)-\lambda\geq \frac{\mathbf x^{\mathrm{T}}A(G_1)\mathbf x}{\mathbf x^{\mathrm{T}}\mathbf x}- \frac{ {\bf x^T} A(G)\bf x}{\bf {x^T}\bf x}\\
   &=& \frac{2x_u}{\mathbf x^{\mathrm{T}}\mathbf x} \Big(x_w-\sum_{uv\in E(G)}x_v\Big)
 = \frac{2x_u}{\mathbf x^{\mathrm{T}}\mathbf x} \Big(1-\lambda x_u \Big),
\end{eqnarray*}
which implies that $$x_u\geq \frac{1}{\lambda }.$$
This completes the proof.

%
%
%
\end{Proof}

\section{Proof of Theorem~\ref{linear forest in bipartite graph} }
	
\noindent{\bf Proof of Theorem~\ref{linear forest in bipartite graph}.}
We  prove  Theorem~\ref{linear forest in bipartite graph} by the induction on $\ell$. For $\ell=1$,
Theorem~\ref{linear forest in bipartite graph} holds  by Theorem~\ref{THM: path in bipartite setting} directly.
Suppose that Theorem~\ref{linear forest in bipartite graph} holds for  $\ell-1$ with $\ell\geq2$.
Let $G=(A,B; E)$ be an extremal bipartite $F_\ell$-free graph with classes $A$ and $B$, where $|A|=m$ and  $|B|=n$ is sufficiently large  with  comparing to $m$.
Let $F_{\ell-1}=\bigcup_{i=1}^{\ell-1} P_{k_i}$
and $s_j=\sum_{i=1}^{j} \lfloor k_i/2\rfloor$.
Then $s_\ell-1=p$.
We divide the proof into parts (a) and (b).

\medskip

(a). Either not all $k_i$'s are odd, or all $k_i$'s are odd with $k_\ell\notin \{3,5,7\}$. We consider the following three subparts.

\medskip

(a.1).  $p+1\leq m\leq 2p$ and not all $k_i$'s are odd.

Since $G$ is  an extremal bipartite $F_\ell$-free graph and $K_{p,n}\cup \overline{K}_{m-p}$ does not contain a copy of $F_\ell$, we have $e(G)\geq p n$.
By Lemma~\ref{large common neighbours}, there is a set $X_1\subseteq A$ with size $ p$ such that the number of common neighbors of $X_1$ is at least $c_1n$, where $c_1>0$.
Since  $G$ is  $F_\ell$-free and  not all $k_i$'s are odd, there are no vertices of $A\backslash X_1$  whose neighbors belongs to $N(X_1)$.
Since $p+1\leq m\leq 2p$, we have
\begin{eqnarray*}
pn\le e(G) &\leq& p\left|N(X_1)\right| +(m-p)\left(n-\left|N(X_1)\right|\right)\\
   &=& (2p-m)\left|N(X_1)\right|+(m-p)n \\
   &\leq& (2p-m)n+(m-p)n\\
   &=&pn.
\end{eqnarray*}
 Then $e(G)=pn$ and  $G=K_{p,n}\cup \overline{K}_{m-p}$ for $p+1\leq m \leq 2p-1$, and $G=K_{p,i}\cup K_{p,n-i}$ with $0\leq i\leq \lfloor k_{\ell}/2\rfloor-1$ for $m=2p$. So $\ex(m,n; F_{\ell})=pn$ for not all $k_i$'s are odd and  $p+1\leq m\leq 2p$.

(a.2).  $p+1\leq m\leq 2p$ and   all $k_i$'s are odd with $k_\ell\notin \{3,5,7\}$.

 Since  $G$ is  an extremal bipartite $F_\ell$-free graph and $Z_{m,n}^p$ is  $F_\ell$-free,  we have $e(G)\geq p n+m-p$. On the other hand, by Lemma~\ref{large common neighbours}, there is a set $X_2\subseteq A$ with size $p$ such that the number of common neighbors of $X_2$ is at least $c_2n$, where $c_2>0$. Let $U$ be the set of $A\backslash X_2$ such that each vertex in $U$ has at least one neighbor in $N(X_2)$.  Then  it is easy to see that each vertex in $U$ have degree one and all vertices in $U$ share a unique common neighbor  in $N(X_2)$, since   $G$  is  $F_\ell$-free and all $k_i$'s are odd with $k_\ell\notin \{3,5,7\}$.
 Since $p+1\leq m\leq 2p$, we have
\begin{eqnarray*}
pn+m-p\le e(G) &\leq& p\left|N(X_2)\right| +|U|+(m-p-|U|)\left(n-\left|N(X_2)\right|\right)\\
   &=& (2p-m+|U|)\left|N(X_2)\right|+|U|+(m-p-|U|)n \\
   &\leq& (2p-m+|U|)n+|U|+(m-p-|U|)n\\
   &=&pn+|U|\\
   &\leq&pn+m-p.
\end{eqnarray*}
Hence $e(G)=pn+m-p$,  $|U|=m-p$ and $|N(X_2)|=n$,  which implies  $G=Z_{m,n}^p$.
So $\ex(m,n; F_{\ell})=pn+m-p$ for $p+1\leq m\leq 2p$ and   all $k_i$'s are odd with $k_\ell\notin \{3,5,7\}$.

  (a.3).  $m\geq 2p+1$.

    Denote by $f(m, n; a, b)=a(n-b)+(m-a)b$.  Since $K_{p, n-\left\lfloor k_{\ell}/2\right\rfloor+1}\cup K_{m-p,\left\lfloor k_\ell/2 \right\rfloor-1}$ is $F_\ell$-free,
we have
\begin{equation}\label{eq for 1}
e(G)\geq  f(m,n; p, \left\lfloor  k_\ell/2 \right\rfloor-1)= p \left(n-\left\lfloor k_\ell/2 \right\rfloor+1\right)
+(m-p)(\left\lfloor k_\ell/2 \right\rfloor-1).
\end{equation}
Hence for sufficiently large $n $ with comparing to $m$,
\begin{eqnarray*} && e(G)-\ex(m,n; F_{\ell-1})\\
&\ge&   p \left(n-\left\lfloor k_\ell/2 \right\rfloor+1\right)
+(m-p)(\left\lfloor k_\ell/2 \right\rfloor-1)\\
&-&(s_{l-1}-1) \left(n-\left\lfloor k_{\ell-1}/2 \right\rfloor+1\right)-(m-s_{\ell-1}+1)
(\left\lfloor k_{\ell-1}/2 \right\rfloor-1)\\
&=& \lfloor k_{\ell}/2\rfloor(n-2\lfloor k_{\ell}/2\rfloor+2)+(\lfloor k_{\ell-1}/2\rfloor-\lfloor k_{\ell}/2\rfloor)(2s_{l-1}-2-m)\\
&>& 0.
\end{eqnarray*}
Therefore, by the induction hypothesis, $G$ contains a copy of $F_{\ell-1}$.
Let $G^\prime=(A_0,B_0; E_0)=G(A,B)-V(F_{\ell-1})$ with $A_0\subseteq A$ and $B_0\subseteq B$, where $|A_0|=m^\prime$ and $|B_0|=n^\prime$.
Denote by  $a= m-m^\prime\ge 1$  and $b=n-n^\prime\ge 1$. Then $a+b=\sum_{i=1}^{\ell-1} k_{i}$.
In order to  prove that $G=K_{p, n-\left\lfloor k_{\ell}/ 2\right\rfloor+1}\cup K_{m-p,\left\lfloor k_\ell/2 \right\rfloor-1}$, we first prove the following two claims.

\medskip

{\bf Claim~1.} There exists a set $X\subseteq A\backslash A_0$ with size $ s_{\ell-1}$ such that the number of common neighbors of $X$ is at least $cn$ for some constant $c>0$.

\begin{proof}
Since $G$ is $F_\ell$-free, $G^\prime$ is $P_{k_\ell}$-free.
It follows from Corollary~\ref{Cor: path in bipartite setting} that
$$e(A_0,B_0)=e(G')\leq \max\{m',(\lfloor  k_\ell/2\rfloor-1)(n'+m'-1) \}\leq m'+(\lfloor  k_\ell/2\rfloor-1)(n'+m'-1).$$
Since $n$ is sufficiently large  $ m$, combining with (\ref{eq for 1}), we also have
\begin{eqnarray*}
  e(A-A_0,B) &=& e(G)-e(A_0,B_0)-e(A_0,B\backslash B_0) \\
  &\geq&(s_\ell-1) \left(n-\left\lfloor k_\ell/2 \right\rfloor+1\right)
+\left(m-s_\ell+1\right)\left(\left\lfloor k_\ell/2 \right\rfloor-1\right)-\\
&&\left(\left\lfloor k_{\ell}/2\right\rfloor-1\right)\left (n^\prime+m^\prime-1\right)-m'-bm' \\
   &=& s_{\ell -1}n-(b+1)m-(\lfloor k_{\ell}/2 \rfloor-1)(2s_{\ell}-a-b-3)+a(b+1)\\
    &\geq&(s_{\ell -1}-\epsilon)n,
\end{eqnarray*}
for a sufficiently small constant $0<\epsilon<1$.
By Lemma~\ref{large common neighbours}, there exists a set $X\subseteq A\backslash  A_0$ with size $ s_{\ell-1}$ such that the number of common neighbors of $X$ is at least $cn$ for some constant $c>0$. This proves Claim~1.\end{proof}

{\bf Claim~2.}  If $k_{\ell}\ge 4$, then  $|N[X]|\geq n-\left(\left\lfloor k_{\ell}/2 \right\rfloor-1\right)(s_{\ell}+\left\lfloor k_{\ell}/2\rfloor-3 \right)$ for the set  $X$ in Claim 1.

\begin{proof} By Claim 1, the subgraph $[X, B; E_1]$ contains a copy of $F_{\ell-1}$. Hence for sufficiently large $n$ with comparing to $m$, $G-X$ is
$P_{k_\ell}$-free. It follows from Corollary~\ref{Cor: path in bipartite setting} again that
\begin{equation}\label{E2'}
 e(G-X)\leq(\lfloor  k_\ell/2\rfloor-1)(n+ m-s_{\ell-1}- 1).
\end{equation}
By (\ref{eq for 1}) and (\ref{E2'}), we have
\begin{equation}\label{E5}
  \begin{array}{lll}
   e(X,B)&=& e(G)-e(G-X) \\  
 &   \geq& \left(n-\left\lfloor k_\ell/2 \right\rfloor+1\right)(s_\ell-1)
+\left(m-s_\ell+1\right)\left(\left\lfloor k_\ell/2 \right\rfloor-1\right)- \\
&& \left(\left\lfloor k_{\ell}/2 \right\rfloor-1\right) \left(n+m-s_{\ell-1}-1\right)\\
  &=& s_{\ell-1}n-\left(\left\lfloor k_{\ell}/2 \right\rfloor-1\right)(s_{\ell}+\left\lfloor k_{\ell}/2\rfloor-3 \right).
  \end{array}
\end{equation}
 By $|X|=s_{\ell-1}$ and (\ref{E5}), we have
 \begin{eqnarray*}
 \left|N[X]\right| &\ge& \sum_{x\in X}d_G(x)-(|X|-1)n\\
  &=&  e(X,B)-(s_{\ell-1}-1)n\\
  &\ge & n-\left(\left\lfloor k_{\ell}/2 \right\rfloor-1\right)(s_{\ell}+\left\lfloor k_{\ell}/2\rfloor-3 \right).
  \end{eqnarray*}
This proves Claim~2.\end{proof}

Next we consider the following four cases.

\medskip

{\bf Case 1.} $k_\ell=2$. By Claim 1, the subgraph of $G$ induced by the vertex set $X\cup (N(X))$ contains a copy of $F_{\ell-1}$.
Then $e(G-X)=0$, which implies that $e(G)=e(X,B)\leq s_{\ell-1}n=pn$, since $p=s_{\ell}-1=s_{\ell}-\lfloor k_{\ell}/2\rfloor=s_{\ell-1}=|X|$. On the other hand, by $k_{\ell}=2$ and (\ref{eq for 1}), $e(G)\ge s_{\ell-1}n=pn$. Hence $e(G)=s_{\ell-1}n$, which implies that $e(X,B)= pn$. Hence  $G=K_{s_{\ell}-1,n}\cup \overline{K}_{m-s_{\ell}+1}$ . 

\medskip

{\bf Case 2.} $k_\ell= 3$. Then there is at least one even number in $k_1,\ldots,k_{\ell-1}$.  By Claim 1, the subgraph of $G$ induced by the vertex set $X\cup N(X)$ contains a copy of $F_{\ell-1}$.
Note that $G$ is $F_\ell$-free. Hence it is easy to see that  each vertex of  $A\backslash X$ is not  adjacent to any neighbor of $X$ and $G-X$ consists of independent edges.
Thus \begin{eqnarray*}
       e(G) &\leq&s_{\ell-1}\left|N(X)\right|+n-\left|N(X)\right|\\
        &=& ( s_{\ell-1}-1)\left|N(X)\right|+n\\
        &\leq& ( s_{\ell-1}-1)n+n \\
        &=& s_{\ell-1}n=pn,
             \end{eqnarray*}
             since $p=s_{\ell}-1=s_{\ell}-\lfloor k_{\ell}/2\rfloor=s_{\ell-1}=|X|$.
On the other hand, by (\ref{eq for 1}), $e(G)\ge pn$. Hence $e(G)=pn$ which implies that  $\left|N(X)\right|=n$.  So $G=K_{p,n}\cup \overline{K}_{m-s_\ell+1}$.

\medskip

{\bf Case 3.}  $k_\ell=5$. By (\ref{eq for 1}) and $|X|=s_{\ell-1}$, we have
\begin{eqnarray*}
  e(G-X)&=& e(G)-e(X,B) \\
   &\geq&   (n-1)(s_\ell-1)+(m-s_\ell+1)-s_{\ell-1}n \\
   &=&n+m-2(s_\ell-1)>n.
\end{eqnarray*}
Hence there is a vertex $u\in A\setminus X$ with $d_G(u)\ge e(G-X)/m\ge n/m$, which implies that  $u$ has at least $n/m-\left(\left\lfloor k_{\ell}/2 \right\rfloor-1\right)(s_{\ell}+\left\lfloor k_{\ell}/2\rfloor-3 \right)$ neighbors in $N[X]$ by Claim~2.
Note that there is at least one even number greater than $5$, say $k_t$, in $k_1,\ldots,k_{\ell-1}$.
Then for every $v\in A\setminus (X\cup\{u\})$, $v$ has no neighbors in $N(X\cup\{u\})$ (otherwise, there exists a path of order $k_t$ in $G$ which contains $u$ and $v$. So $G$ contains a copy of $F_{\ell}$, a contradiction).
Denote by $|N(X\cup\{u\})|=r$.
 By (\ref{eq for 1}), $e(G)\geq (s_{\ell}-1)n+m-2s_{\ell}+2$.

 If $r=n$,  then by $s_{\ell}=s_{\ell-1}+\lfloor k_{\ell}/2\rfloor=s_{\ell-1}+2$,
\begin{eqnarray*}
 e(G) \leq |X\cup \{u\}|n=(s_{\ell-1}+1)n
   = (s_{\ell}-1)n<e(G),
\end{eqnarray*}
which is a contradiction.

If $r\leq n-2$, then  by $G-(X\cup\{u\})$ being $P_5$-free and  Theorem~\ref{THM: path in bipartite setting},
\begin{eqnarray*}
  e(G)&=& e(X\cup\{u\}, N(X\cup\{u\})+e(G-X\cup\{u\})\\
  &\leq& (s_{\ell-1}+1)r+\mbox{ex}(m-s_{\ell-1}-1,n-r;P_5) \\
   &=&  (s_{\ell-1}+1)r+(m-s_{\ell-1}-1+n-r)\\
   &=&s_{\ell-1}(r-1)+m+n-1 \\
   &\leq& s_{\ell-1}(n-3)+m+n-1\\
   &=&(s_{\ell}-1)n+m-3s_{\ell}+5< e(G),
\end{eqnarray*}
which is a contradiction.

Therefore $r=n-1$. Furthermore
 \begin{eqnarray*}
  e(G) \leq (s_{\ell-1}+1)(n-1)+m-s_{\ell-1}-1
   =  (s_{\ell}-1)n+m-2s_{\ell}+2\leq e(G).
\end{eqnarray*} Hence
$e(G)= (s_{\ell}-1)n+m-2s_{\ell}+2=p(n-\lfloor k_{\ell}/2\rfloor+1)+(m-p)(\lfloor k_{\ell}/2\rfloor-1)$  and $G=K_{p, n-\lfloor k_{\ell}/2\rfloor+1}\cup K_{m-p,\lfloor k_{\ell}/2\rfloor-1}$ by $s_{\ell-1}=s_{\ell}-\lfloor k_{\ell}/2\rfloor=s_{\ell}-2=p-1$.

\medskip

{\bf Case 4.}  Either $k_\ell=  4$ or $k_\ell\geq 6$.
If $e(X,B)\leq s_{\ell-1}\left(n-\left\lfloor  k_\ell/2 \right\rfloor+1\right)$, then
by (\ref{eq for 1}),  we have $e(A \setminus X, B)\geq (\lfloor  k_\ell/2 \rfloor-1)(n+m-s_{\ell-1}-2\lfloor  k_\ell/2 \rfloor+2)$.
In addition, since $G[A \setminus X , B]$ is $P_{k_\ell}$-free, by Theorem~\ref{THM: path in bipartite setting} and $m\geq 2p+1$,
$e(A \setminus X, B)\leq (\lfloor  k_\ell/2 \rfloor-1)(n+m-s_{\ell-1}-2\lfloor  k_\ell/2 \rfloor+2).$  Furthermore, it is easy to see that
$G=K_{s_\ell-1, n-\left\lfloor k_{\ell}/2 \right\rfloor+1}\cup K_{m-s_\ell+1,\left\lfloor k_\ell/2 \right\rfloor-1}$ and thus we are done.

If
\begin{equation}\label{E6}
e(X,B)> s_{\ell-1}\left(n-\left\lfloor  k_\ell/2 \right\rfloor+1\right),
\end{equation}
then
 there exists at least one vertex $u\in X$ with degree at least $n-\lfloor  k_\ell/2 \rfloor+2$.
Moreover, by (\ref{eq for 1}) and $e(X,B)\le |X|n=s_{\ell-1}n$, we have
\begin{equation}\label{eq 2}
 \begin{array}{lll}
  e(G-X)&=&e(G)-e(X,B)\\
 &   \geq& \left(n-\left\lfloor k_{\ell}/2\right\rfloor+1\right)(s_\ell-1)
+\left(m-s_\ell+1\right)\left(\left\lfloor k_{\ell}/2\right\rfloor-1\right) -s_{\ell-1}n\\
&=&\left(n+m-2s_{\ell}+2\right)(\lfloor k_{\ell}/2\rfloor-1).
  \end{array}
\end{equation}
If we delete vertices of degree at most $\lfloor k_{\ell}/2\rfloor-2$ in $G-X$ until that the resulting graph $G^\ast$ has no such vertex,
then the minimum degree of $G^\ast$ is at least $\lfloor k_{\ell}/2\rfloor-1$.
Let $Z$ be the set of  all deleted vertices of $G-X$ and $G^\ast=(A^\ast,B^\ast; E^\ast)=G-X-Z$.
Then we have 
$|Z|\leq(s_{\ell}+\lfloor k_{\ell}/2\rfloor-3)(\lfloor k_{\ell}/2\rfloor-1)$.
Otherwise, by (\ref{eq 2}) we have
\begin{equation}\label{eq 3}
 \begin{array}{lll}
e(G^\ast)&\geq&e(G-X)- (\lfloor k_{\ell}/2\rfloor-2) |Z|\\
&\geq&\left(n+m-2s_{\ell}+2\right)(\lfloor k_{\ell}/2\rfloor-1)- (\lfloor k_{\ell}/2\rfloor-2) |Z|\\
&=&\left(n+m-s_{\ell-1}- |Z|-1\right)(\lfloor k_{\ell}/2\rfloor-1)+|Z|\nonumber-(s_{\ell}+\lfloor k_{\ell}/2\rfloor-3)(\lfloor k_{\ell}/2\rfloor-1)\\
&> &(|A^\ast|+|B^\ast|-1) (\lfloor k_{\ell}/2\rfloor-1).
  \end{array}
\end{equation}
It follows from Corollary~\ref{Cor: path in bipartite setting} that $ G^\ast$ contains a copy of $P_{k_\ell}$, a contradiction (it is easy to find a copy of $F_{\ell-1}$  containing $X$).
Let $B^\prime$ be the common neighbours of $X$ in $B^\ast$. By Claim~2 and $|B^\ast| \geq n-(s_{\ell}+\lfloor k_{\ell}/2\rfloor-3)(\lfloor k_{\ell}/2\rfloor-1)$,
 we have $|B^\prime|\geq n-2(s_{\ell}+\lfloor k_{\ell}/2\rfloor-3)(\lfloor k_{\ell}/2\rfloor-1)$. Next we consider the following subcases.

 \medskip

{\bf Subcase 4.1.}   $k_\ell=4$.
Then  $d_G(u)=n$ and $G^\ast$ consists of stars.
Since $ |B^\ast|$ is sufficiently large with comparing to $|A^\ast|$,
there is a vertex $y \in A^\ast$ such that $y$ is adjacent is adjacent to at least $2s_{\ell}$ vertices of $B^\prime$.
If there is a vertex $z\in A\setminus  (X\cup \{y\})$ which is adjacent to any vertex of $B$,
then we can easily find a copy of $F_\ell$ in $G$ such that $P_4$ contains $u, z$ and $F_\ell-P_4$ contains $(X\setminus\{u\})\cup \{y\}$.
Hence we have $$e(G)\leq (s_{\ell-1}+1)n<(s_{\ell-1}+1)n+m-2s_{\ell}+2\leq e(G),$$ a contradiction. This finishes Subcase 4.1.

\medskip

{\bf Subcase 4.2.}  $k_\ell\geq 6$.
Hence, by Theorem~\ref{cycle}, $G^\ast$ contains  a cycle $C$ of length  $2\lfloor k_{\ell}/2\rfloor-2$ with partite sets $A_C\subseteq A^\ast$ and $B_C\subseteq B^\ast$.
Suppose that there is a vertex $z$ in $A\setminus (X\cup A_C)$ which is adjacent to at least $2s_\ell$ vertices of $B^\prime$.
If there is a vertex  $w\in N(u)$ which is adjacent to $v\in A\setminus (X\cup A_C\cup\{z\})$, then there is a copy of $F_{\ell}$ such that $F_{\ell-1}$ contains $(X\cup\{z\})\setminus\{u\}$  and $P_{k_\ell}$ contains $A_C\cup B_C \cup\{u,v,w\}$, a contradiction.
Thus each vertex of  $N(u)$ is not adjacent to any vertex of $A\setminus (X\cup A_C\cup\{z\})$.
Since each vertex in $B^\ast$ has degree $\lfloor k_\ell/2\rfloor-1$ in $G^\ast$,   each vertex of $N(u)\cap B^*$ is adjacent to at least one vertex of $A_C$.
Hence, it is easy to check that $G$ contains a copy of $F_\ell$ ($P_{k_{\ell}}$ contains $\{u\}\cup A_C\cup B_C$ and two vertices in $N(u)$), a contradiction.
Now we may suppose that each vertex in $A\setminus (X\cup A_C)$  is adjacent to at most $2s_\ell-1$ vertices of $B^\prime$. This implies that each vertex in $A\setminus (X\cup A_C)$  is adjacent to at most $2\lfloor k_{\ell}/2\rfloor(s_{\ell}+ \lfloor k_{\ell}/2\rfloor-2)$  vertices of $B$ and thus $e( A\setminus (X\cup A_C),B)\leq 2\lfloor k_{\ell}/2\rfloor(m-s_{\ell}+1)(s_{\ell}+ \lfloor k_{\ell}/2\rfloor-2)$.
Then by (\ref{eq 2}),
we have
\begin{eqnarray*}
  e(A_C,B) &=& e(G-X)-e( A\setminus (X\cup A_C),B)\\
   &\geq& \left(n+m-2s_{\ell}+2\right)(\lfloor k_{\ell}/2\rfloor-1)-2\lfloor k_{\ell}/2\rfloor(m-s_{\ell}+1)(s_{\ell}+ \lfloor k_{\ell}/2\rfloor-2)\\
   &=&(\lfloor k_{\ell}/2\rfloor-1)n-[(2\lfloor k_{\ell}/2\rfloor(s_{\ell}+ \lfloor k_{\ell}/2\rfloor-5/2)+1]m\\
   &&+2(s_{\ell}-1)[\lfloor k_{\ell}/2\rfloor(s_{\ell}+ \lfloor k_{\ell}/2\rfloor-3)+1]\\
   &\geq&(\lfloor k_{\ell}/2\rfloor-1)n-[(2\lfloor k_{\ell}/2\rfloor(s_{\ell}+ \lfloor k_{\ell}/2\rfloor-5/2)+1]m.
\end{eqnarray*}
Combining with $|A_C|= \lfloor k_{\ell}/2\rfloor-1$,  each vertex of $A_C$ is adjacent to at least $n-[(2\lfloor k_{\ell}/2\rfloor(s_{\ell}+ \lfloor k_{\ell}/2\rfloor-5/2)+1]m$ vertices of $B$ and hence the number of common neighbors of $X\cup A_C$ is at least $n-\epsilon n$, where $\epsilon$ is a small positive constant. Let $q_1$ be the number of vertices in $A\setminus (X\cup A_C)$ with degree at most one and $|N(X\cup A_C)|=q_2\geq n-\lfloor k_\ell/2\rfloor+2$.

Hence if all of $k_1,\ldots,k_\ell$ are odd, then each vertex of $A\setminus (X\cup A_C)$ with degree at least two is not adjacent to  any vertex in $N(X\cup A_C)$, as otherwise $G$ contains  a copy of $F_\ell$.
Then we have
 \begin{eqnarray*}
   e(G) &\leq&   q_1  + (s_\ell-1)q_2+\mbox{ex}(m-s_\ell-q_1+1,n-q_2;P_{k_\ell}) \\
    &=&  q_1  + (s_\ell-1)q_2+(m-s_\ell-q_1+1)(n-q_2) \\
    &=&  (m-s_\ell-q_1+1)n+(2s_\ell+q_1-m-2)q_2+q_1\\
   &\leq& (m-s_\ell-q_1+1)n+(2s_\ell+q_1-m-2)n+q_1\\
   &=&(s_\ell-1)n+q_1\\
    &\leq& (s_\ell-1)n+m-s_{\ell}+1\\
   &<&f(n,m,\lfloor k_\ell/2\rfloor,s_\ell),
 \end{eqnarray*}
 where the last strict inequality holds by $k_\ell\geq 9$ and $m\geq 2p+1$, a contradiction.
If not all $k_i$'s are odd, then no vertex of $A\setminus (X\cup A_C)$ is adjacent to $N(X\cup A_C)$.
Since $q_2\geq n-\lfloor k_\ell/2\rfloor+2$, we have
 \begin{eqnarray*}
   e(G) &\leq&   (s_\ell-1)q_2+\mbox{ex}(m-s_\ell+1,n-q_2;P_{k_\ell}) \\
    &=&  (s_\ell-1)q_2+(m-s_\ell+1)(n-q_2) \\
    &=&  (2s_\ell-2-m)q_2+(m-s_\ell+1)n\\
      &\leq&(2s_\ell-2-m)n+(m-s_\ell+1)n\\
      &=&(s_\ell-1)n\\
   &<&f(n,m,\lfloor k_\ell/2\rfloor,s_\ell),
 \end{eqnarray*}
  a contradiction.
 This finishes Subcase 4.2 and hence the proof of part (a) is complete.

\medskip

(b). All $k_i$'s are odd and $k_\ell\in \{3,5,7\}$.

\medskip

 Let $k_1=\cdots=k_\ell=3$ or  $k_1\geq \cdots \geq k_\ell=5$. Denote by  $G_1$  the graph obtained from taking a copy of $K_{s_\ell-1,n}$ and $m-s_\ell+1$ isolated vertices and joining the isolated vertices to  the partite set of $K_{s_\ell-1,n}$ with size $n$   by $m-s_\ell+1$ independent edges. Clearly, $G_1$ is $F_\ell$-free for $k_1=\cdots=k_\ell=3$  and $Z_{m,n}^p $ is $F_\ell$-free for  $k_1\geq \cdots \geq k_\ell=5$.
 Since $G$ is  an extremal bipartite $F_\ell$-free graph, we have
\begin{equation}
e(G)\geq(s_{\ell}-1)(n-1) +m.
\end{equation}
Similarly, as the proof of (a), there exists $X_1\subseteq A$ with size $s_{\ell-1} $ such that the number of common neighbors of $X_1$ is at least $c_1n$, where $c_1>0$.
If $k_1=\cdots=k_\ell=3$, then
 $G-X_1$ is $P_3$-free
and $e(G-X_1)\leq m-s_{\ell-1}$. Thus
$$e(G)=e(X_1,B)+e(G-X_1)\leq s_{\ell-1}n+m-s_{\ell-1}=(s_{\ell}-1)(n-1)+m\leq e(G),$$
which implies that $e(G)=(s_{\ell}-1)(n-1)+m$ and  $G=G_1$.
If $k_1\geq \cdots \geq k_\ell=5$.
Then $G-X_1$ is $P_5$-free, then
by Theorem~\ref{THM: path in bipartite setting},  we have $e(G-X_1)\leq n+m-s_{\ell-1}-1$.
Thus
$$e(G)=e(X_1,B)+e(G-X_1)\leq s_{\ell-1}n+n+m-s_{\ell-1}-1=(s_{\ell}-1)(n-1)+m\leq e(G).$$
which implies that $e(G)=(s_{\ell}-1)(n-1)+m$ and  $G=Z_{m,n}^p$.

Let $k_1\geq \cdots\geq k_{\ell-1}\geq  k_{\ell} =3$  with $k_1\geq 5$.
Clearly, $Z_{p+1,n}^p\cup \overline{K}_{m-p-1}$ is $F_\ell$-free. Since $G$ is  an extremal bipartite $F_\ell$-free graph, we have
\begin{equation}\label{eq for last case}
e(G)\geq(s_{\ell}-1)n+1.
\end{equation}
Similarly, as the proof of (a), there exists $X_2\subseteq A$ with size $s_{\ell-1} $ such that the number of common neighbors of $X_2$ is at least $c_2n$, where $c_2>0$.
Since $k_\ell=3$, $G-X_2$ consists of independent edges and isolated vertices.
By (\ref{eq for last case}), there is a vertex $x\in B$ with degree $s_{\ell-1}+1$, i.e., $x$ is adjacent to each vertex of $X_2$ and some vertex of $A\backslash X_2$.
Moreover, $x$ is the unique vertex in $B$ with at least degree $s_{\ell-1}+1$, as otherwise it is easy to see that $G$ contains a copy of $F_\ell$ ($k_1\geq 5$).
Hence, we have $e(G)\leq(s_{\ell}-1)n+1$.
Moreover, the equality holds if and only if $G=Z_{p+1,n}^p\cup \overline{K}_{m-p-1}$.

Let $k_\ell=7$ and $p+1\leq m\leq 3p$.
Since  $G$ is  an extremal bipartite $F_\ell$-free graph and $Z_{m,n}^p$ is  $F_\ell$-free,  we have $e(G)\geq p n+m-p$.
On the other hand, by Lemma~\ref{large common neighbours}, there is a set $X_3\subseteq A$ with size $p$ such that the number of common neighbors of $X_3$ is at least $c_3n$, where $c_3>0$.
Let $U$ be the set of $A \setminus X_3$ such that each vertex in $U$ has at least one neighbor in $N(X_3)$.
Then  it is easy to see that each vertex in $U$ have degree one and all vertices in $U$ share a unique common neighbor in $N(X_3)$, since   $G$  is  $F_\ell$-free and all $k_i$'s are odd.
Let $m_1=m-p-|U|$ and $n_1=n-\left|N(X_3)\right|$.
Hence, by $p+1 \leq m\leq 3p$, $p\geq 3$ and Lemma~\ref{lemma for P7 1}, we have
\begin{eqnarray*}
 e(G) &\leq& p(n-n_1) +(m-p-m_1)+\ex(m_1,n_1,P_{7})\\
  &\leq& pn +m-p+\ex(m_1,n_1,P_{7})-pn_1-m_1\\
   &\leq&pn+m-p.
\end{eqnarray*}
Hence $e(G)=pn+m-p$,  $|U|=m-p$ and $|N(X_2)|=n$,  which implies  $G=Z_{m,n}^p$.
So $\ex(m,n; F_{\ell})=pn+m-p$ for $p+1\leq m\leq 3p$. 
Let $m\geq 3p+1$.
Since $K_{p,n-2}\cup K_{2,m-p}$ is $F_\ell$-free, we have $e(G)\geq p(n-2)+2(m-p)$.
Note that Claim 2 still holds in this case.
We have $|N[X]|\geq n- 2s_{\ell}$.
Let $U$ be defined as before.
Then  by $m\geq 3p+1$, $p\geq 3$ and Lemma~\ref{lemma for P7 2}, we have
\begin{eqnarray*}
 e(G) &\leq& p(n-n_1) +(m-p-m_1)+\ex(m_1,n_1,P_{7})\\
   &\leq& p(n-2) +2(m-p)+\ex(m_1,n_1,P_{7})-p(n_1-2)-m+p- m_1\\
   &\leq&p(n-2)+2(m-p).
\end{eqnarray*}
Moreover, the equality holds if and only if $G=K_{p,n-2}\cup K_{2,m-p}$.
The proof of (b) is completed.
\QEDB

\section{Proofs of Theorem~\ref{linear forest} and Corollary~\ref{Cor}}

In order to prove  Theorem~\ref{linear forest}, we first prove the following key lemma.

\begin{Lemma}\label{Lem3.1}
 Let $F_{\ell}=\bigcup_{i=1}^{\ell} P_{k_i}$ be a linear forest with   ${\ell}\geq 1$ and
$k_1\geq \cdots \geq k_{\ell}\geq2$.
Let $ p=\sum _{i=1}^{\ell} \lfloor k_i/2 \rfloor-1\ge 1$
 If  $G$ is  an $F_{\ell}$-free connected  graph  of sufficiently large order $n$ with $\lambda(G)\geq \sqrt{p(n-p)}$, then  $K_{p,n-p}$ is a spanning subgraph of $G$.
\end{Lemma}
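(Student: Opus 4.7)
The plan is to deduce the spanning-subgraph conclusion from the spectral lower bound by combining an eigenvector analysis with the extremal results proved earlier in the paper. Let $\mathbf{x} = (x_v)_{v \in V(G)}$ be a positive eigenvector of $A(G)$, normalized so that $\max_v x_v = 1$, attained at a vertex $w$, and set $\lambda = \lambda(G) \geq \sqrt{p(n-p)}$. The goal is to find a set $A \subseteq V(G)$ with $|A| = p$ such that every vertex of $V(G) \setminus A$ is adjacent in $G$ to every vertex of $A$; this would realize $K_{p, n-p}$ as a spanning subgraph.

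The first step is to show that the vertex $w$ (or a judiciously chosen vertex with slightly smaller eigenvector entry) has degree close to $n$. Applying the eigenequation twice at $w$ gives
$$\lambda^2 = \lambda^2 x_w = \sum_{u \sim w}\sum_{v \sim u} x_v \leq \sum_{u \sim w} d_G(u),$$
so $\sum_{u \sim w} d_G(u) \geq \lambda^2 \geq p(n-p)$. Combining this with Theorem~\ref{Thm1.3}, which gives $e(G) \leq \binom{p}{2} + p(n-p) + c$ for large $n$, severely constrains the degree distribution. Applying the sharp bipartite Tur\'{a}n estimate of Theorem~\ref{linear forest in bipartite graph} to the $F_\ell$-free bipartite subgraph of $G$ between $N_G(w)$ and $V(G)\setminus (N_G(w)\cup\{w\})$ then forces $d_G(w) \geq n - C$ for some constant $C = C(p)$, via a case analysis mirroring the five-case structure of Theorem~\ref{linear forest in bipartite graph} (on the parity of the $k_i$'s and whether $k_\ell \in \{3,5,7\}$).

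Having located a vertex of near-full degree, I would iterate by removing $w$ and working in $G - w$, which is $F_{\ell}'$-free for the shorter linear forest $F_{\ell}' = P_{k_1}\cup\cdots\cup P_{k_{\ell-1}}$ (or an appropriate truncation) and whose spectral radius remains comparable to $\sqrt{p'(n-1-p')}$ with $p' = p - \lfloor k_\ell/2\rfloor$ by interlacing. A straightforward induction on $\ell$ (base $\ell = 1$ covered by the $P_k$ spectral result of Zhai--Lin--Gong referenced in the introduction) yields $p$ vertices $A = \{w_1, \ldots, w_p\}$, each of degree at least $n - O_p(1)$. Finally, to upgrade ``near-full degree'' to the spanning-subgraph claim, I would argue by contradiction: if some $v \in V(G) \setminus A$ were non-adjacent to some $w_i \in A$, the near-full neighborhoods of the remaining $w_j$ ($j \neq i$) together with $v$ and $w_i$ allow the embedding of a copy of $F_\ell$ into $G$---using the longer paths of $F_\ell$ to traverse through $A$ while placing the shorter paths in the abundant common neighborhood of $A$ in $V(G)\setminus A$---contradicting $F_\ell$-freeness.

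The main obstacle is the first step, bootstrapping the sum-degree bound $\sum_{u \sim w}d_G(u) \geq p(n-p)$ into the structural conclusion $d_G(w) \geq n - O_p(1)$ for the specific vertex $w$. The naive averaging alone only shows that \emph{some} neighbor of $w$ has linear degree, and pinning down $w$ itself (and later the full set $A$) requires the sharp form of the bipartite Tur\'{a}n estimate in Theorem~\ref{linear forest in bipartite graph}. The induction base $p = 1$ and the odd-path special cases $k_\ell \in \{3,5,7\}$ should each be handled separately, paralleling the case analysis in the proof of Theorem~\ref{linear forest in bipartite graph}.
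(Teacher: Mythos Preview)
Your induction on $\ell$ breaks in two places. Deleting a single high-degree vertex $w$ does not render $G-w$ free of $F_{\ell-1}=\bigcup_{i<\ell}P_{k_i}$: for that you would need every copy of $F_{\ell-1}$ in $G-w$ to extend, inside $G$, to a copy of $F_\ell$ by adjoining a vertex-disjoint $P_{k_\ell}$ through $w$, but one hub alone cannot manufacture a path of order $k_\ell\geq4$ (your proposed parameter drop $p'=p-\lfloor k_\ell/2\rfloor$ after removing \emph{one} vertex already signals the mismatch). Nor does interlacing control $\lambda(G-w)$: it only gives $\lambda(G-w)\geq\lambda_2(G)$, and for graphs close to $K_{p,n-p}$ one has $\lambda_2=0$, so there is nothing to feed into an inductive hypothesis.

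The final ``upgrade'' step is also in the wrong direction: a missing edge $v\not\sim w_i$ obstructs embeddings of $F_\ell$ rather than enables them, so it cannot yield the contradiction you describe. The paper proceeds quite differently. It finds $A$ by a direct eigenvector argument, with no induction on $\ell$: partition $V(G)$ into $L=\{v:x_v>\epsilon\}$ and $S=V\setminus L$, use Theorem~\ref{linear forest in bipartite graph} to get $e(L,S)<pn$, and combine the inequality $\lambda^2\leq\sum_{uv\in E}(x_u+x_v)$ with the pointwise lower bound $x_v\geq 1/\lambda$ of Lemma~\ref{spec1} to show, iteratively, that $p$ vertices of $L$ have $x$-entry close to $1$ and hence degree at least $(1-o(1))n$. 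The last step---promoting ``near-full'' to ``full'' common neighbourhood---is an edge-switching argument: if some $v$ fails to be adjacent to all of $A$, delete every edge at $v$ and join $v$ to each vertex of $A$; the resulting graph is still $F_\ell$-free, and a Rayleigh-quotient computation (using that $x_u$ is close to $1$ on $A$ and at most $1/(k_\ell-1)$ off $A$) shows its spectral radius strictly exceeds $\lambda$, contradicting extremality.
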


\begin{Proof}
Let $\lambda=\lambda(G)$ and $\mathbf x=(x_v)_{v\in V(G)}$ be a positive eigenvector to $\lambda $ such that  $w\in V(G)$ and $$x_w=\max\{x_v:v\in V(G)\}=1.$$
Set $L=\{v\in V(G): x_v> \epsilon\} $ and $S=\{v\in V(G): x_v\leq \epsilon\} $, where $c=2(p+ k_{\ell}-2)(4p+5)^{2p-1}(k_{\ell}-1)$ and $\epsilon$ to be chosen such that
$$\frac{\sqrt{2p+1}\sqrt{\sum_{i=1}^{\ell}k_i-2}}{\sqrt[4]{p(n-p)}}\leq\epsilon\leq\frac{1}{c}-\frac{p}{cn}.$$
Since $K_{p,n-p}$ is $F_{\ell}$-free,
\begin{eqnarray}\label{I2}
  \lambda &\geq& \lambda(K_{p,n-p})= \sqrt{p(n-p)}.
\end{eqnarray}
 By Theorem~\ref{Thm1.3} and Theorem 1.6 in \cite{Yuan2017},
\begin{eqnarray}\label{I3}
2e(S)&\leq& 2e(G)\leq (2p+1)n-p^2-2p\leq(2p+1)n.
\end{eqnarray}
We will finish our proof of the lemma in the following six claims.

\medskip

{\bf Claim~1.}     $2e(L)\leq  \epsilon n$ and  $e(L,S)<pn$.

\begin{proof}
By eigenequation of $A$ on any vertex $u\in L$, we have
$$\lambda\epsilon<\lambda x_u=\sum_{uv\in E(G)} x_v\leq d(u),$$
Thus $$2e(G)=\sum_{u\in V(G)}d(u)\geq \sum_{u\in L }d(u)\geq |L|\lambda\epsilon,$$
which implies that
\begin{eqnarray}\label{I0}
|L|\leq\frac{2e(G)}{\lambda\epsilon}\leq\frac{(2p+1)n}{\sqrt{p(n-p)}\epsilon}\leq \frac{\epsilon}{r-2} n,
\end{eqnarray}
where $r=\sum_{i=1}^{\ell} k_i$.   Since the subgraph induced by $L$ is $P_r$-free, it  follows from Theorem~\ref{Thm1.1} and (\ref{I0}) that
 $$2e(L)\leq( r-2)|L|\leq  \frac{(r-2)\epsilon n}{r-2}= \epsilon n.$$
Note that if $|L|\leq p$, then $e(L,S)< pn$;
 if $|L|\geq p+1$, then it follows from Theorem~\ref{linear forest in bipartite graph} that $e(L,S)< pn$. Hence in both situations, we have  $$e(L,S)< pn.$$
 This proves Claim~1.\end{proof}

{\bf Claim~2.}
For each $u\in L$, $ d(u)\geq (1-(3p+3)(1-x_u+\epsilon))n$.

\begin{proof}
Denote $B_u=\{v\in V(G): uv\notin E(G)\}$.
Then
\begin{eqnarray*}
  \lambda \sum_{v\in V(G)}x_v &=& 
  \sum_{v\in V(G)} \sum_{vz\in E(G) } x_z\\
  &=& 
   \sum_{v\in L}  d(v)x_v + \sum_{v\in S} d(v)x_v \\
  &\leq& \sum_{v\in L}  d(v)+\epsilon \sum_{v\in S} d(v)\\ 
   &=&  2e(L)+2\epsilon e(S)+(1+\epsilon)e(L,S),
\end{eqnarray*}
which implies that
\begin{equation}\label{4}
 \sum_{v\in V(G)}x_v\leq \frac{ 2e(L)+2\epsilon e(S)+(1+\epsilon)e(L,S)}{\lambda}.
\end{equation}
By  (\ref{4}) and Lemma~\ref{spec1}, we have
\begin{equation*}\label{5}
  \begin{aligned}
 \frac{1}{  \lambda }|B_u|&\leq \sum_{v\in B_u}x_v\leq\sum_{v\in V(G)}x_v-\sum_{uv\in E(G)} x_v=\sum_{v\in V(G)}x_v-\lambda x_u\\
   &\leq  \frac{ 2e(L)+2\epsilon e(S)+(1+\epsilon)e(L,S)}{\lambda}-\lambda x_u,
     \end{aligned}
\end{equation*}
which implies that
\begin{eqnarray*}
  |B_u| &\leq&  2e(L)+2\epsilon e(S)+(1+\epsilon)e(L,S)-\lambda^2x_u \\
   &<&  \epsilon n+\epsilon(2p+1)n+(1+\epsilon)pn-p(n-p)x_u \\
 &=&((3p+2)\epsilon+(1-x_u)p)n+p^2x_u\\
   &\leq& (3p+5/2)(1-x_u+ \epsilon)n,
\end{eqnarray*}
where the last second inequality holds as $p^2x_u\leq p^2\leq\frac{\epsilon n}{2}\leq\frac{(1-x_u+\epsilon)n}{2}$.
Hence $$d(u)\geq n-1-(3p+5/2)(1-x_u+ \epsilon)n\geq(1-(3p+3)(1-x_u+\epsilon))n.$$
This proves Claim~2.\end{proof}

{\bf Claim~3.}  For a given integer $s$ with $ 1 \leq s<p $, if   there exists a set $X$ of $s$ vertices such that $X=\{v\in L: x_v\geq 1-\eta ~\text{and} ~d(v)\geq(1 - \eta)n\} $, where $\eta$ is much smaller than $1$, then there exists  a vertex $v\in L \backslash X $ such that  $ x_{v}\geq 1 - 12(p+1)^2(\eta + \epsilon)$ and  $d(v)\geq (1 - 12(p+1)^2(\eta + \epsilon))n $.
\vspace{2mm}

\begin{proof}
Denote $t=|L\cap X|$.
By eigenequations of $A^2$ on vertex  $w$, we have
\begin{eqnarray*}
 \lambda^2&=&\lambda^2x_w=\sum\limits_{vw\in E(G)}\sum\limits_{uv\in E(G)}x_u\\
 &\leq& \sum\limits_{v\in V(G)}\sum\limits_{uv\in E(G)}x_u=\sum\limits_{uv\in E(G)}(x_u+x_v)\\
   &=& \sum\limits_{uv\in E(S)}(x_u+x_v) +\sum\limits_{uv\in E(L)}(x_u+x_v)+\sum\limits_{uv\in E(L,S)}(x_u+x_v)\\
   &\leq&  2\epsilon e(S)+2e(L)+\sum\limits_{uv\in E(L,S)}(x_u+x_v)\\
  & \leq& 2\epsilon e(S)+2e(L)+\epsilon e(L,S)+\sum_{\substack{ uv\in E( L\backslash X,S)\\u \in L\backslash X }}x_u+\sum_{\substack{uv\in E(L\cap X,S)\\u\in L\cap X}}x_u,
\end{eqnarray*}
which implies that
\begin{eqnarray*}
 \sum_{\substack{uv\in E(L\backslash X,S)\\ u\in L\backslash X}}x_u
 &\geq&  \lambda^2- 2\epsilon e(S)-2e(L)-\epsilon e(L,S)-\sum_{\substack{uv\in E( L\cap X,S)\\u\in L\cap X}}x_u\\
       &\geq&p(n-p)-\epsilon(2p+1)n-\epsilon n-\epsilon pn-tn\\
       &=&(p-t-\epsilon (3p+2))n-p^2\\
       &\geq&(p-t-\epsilon (3p+3))n
    \end{eqnarray*}
    where the last inequality holds as $\epsilon \geq\frac{ p^2}{n} $.
In addition, since
$$e(L\cap X, S)+e(L\cap X,L\backslash X)+2e(L\cap X)=\sum_{v\in L\cap X} d(v)\geq t(1-\eta)n,$$
we have
\begin{eqnarray*}
  e(L\cap X, S)  &\geq& t(1-\eta)n-e(L\cap X,L\backslash X)-2e(L\cap X) \\
   &\geq& t(1-\eta)n-t(|L|-t)-t(t-1) \\
   &\geq&  t(1-\eta)n-t(\epsilon n-t)-t(t-1)\\
      &=& t(1-\eta-\epsilon )n+t.
\end{eqnarray*}Hence we have
  $$e(L\backslash X,S)= e(L,S)-e(L\cap X, S)\leq  pn-t(1-\eta-\epsilon )n-t <(p-t(1-\eta-\epsilon ))n.$$
Let $$f(t)=\frac{p-t-\epsilon (3p+3)}{p-t(1-\eta-\epsilon )}.$$  It is easy to see that $f(t)$ is decreasing with respect to $1\leq t\leq s \leq p-1$.
Then \begin{eqnarray*}
  \frac{\sum\limits_{\substack{uv\in E(L\backslash X,S)\\ u\in L\backslash X}}x_u}{e(L\backslash X,S)} &\geq& f(t)\geq f(p-1)
     =\frac{1-\epsilon (3p+3)}{1+(p-1)(\eta+\epsilon)}\geq1-(4p+2)(\eta+\epsilon).
\end{eqnarray*}
Hence there exists a vertex $u\in L\backslash X$ such that
 $$ x_u\geq1-(4p+2)(\eta+\epsilon)>1-12(p+1)^2(\eta+\varepsilon).$$
By Claim~2,
\begin{eqnarray*}
d(u) &\geq& (1-(3p+3)((4p+2)(\eta+\epsilon)+\epsilon))n\\
 &\geq &(1-(3p+3)(4p+3)(\eta+\epsilon))n\geq(1-12(p+1)^2(\eta+\epsilon))n.
\end{eqnarray*}
This proves Claim~3.\end{proof}

{\bf Claim~4.} There exists a set $A$ of size $p$ such that $x_v\geq 1-(4p+5)^{2p-1}\epsilon$ and $d(v)\geq(1-(4p+5)^{2p-1}\epsilon)n$ for any $v\in A$. Moreover,  $|N[A]|\geq(1-p(4p+5)^{2p-1}\epsilon)n$.

\begin{proof}
Let $v_1=w $. Then $x_{v_1}=1$. By Claim 2,
$$d(v_1)\geq (1-(3p+3)\epsilon)n.$$
 Let $\eta_1=(3p+3)\epsilon$. Applying Claim~3 iteratively,  we can get another distinct $p-1$ vertices  $v_2,\ldots, v_{p}$ of $L$  such that for $i=2,\ldots,p$,
$$x_{v_i}\geq 1-\eta_i$$ and $$d(v_i)\geq (1-\eta_i)n,$$  where $\eta_i=12(p+1)^2(\eta_{i-1}+\epsilon)$.
Let $A=\{v_1,v_2,\ldots,v_p\}$.  By the definition of $\eta_1,\ldots, \eta_{p}$, we have
$$\eta_1\leq\dots\leq\eta_{p}=\left(3\times12^{p-1}(p+1)^{2p-1}+\sum_{i=1}^{p-1}12^i(p+1)^{2i}\right)\epsilon\le(4p+5)^{2p-1}\epsilon.$$
Then
$$\left|\bigcap_{i=1}^{p} N(v_i)\right|\geq \sum_{i=1}^{p}(1-\eta_i)n-(p-1)n=\left(1-\sum_{i=1}^{p}\eta_i\right)n\geq(1-p(4p+5)^{2p-1}\epsilon)n.$$
The proof of Claim 4 is complete.\end{proof}

\medskip

{\bf Claim~5.} Denote   $B=N[A]$ and $R=V(G)\backslash (A\cup B)$. Then
 $x_v\leq\frac{1}{k_{\ell}-1}$ for  $v\in B\cup R$.
\begin{proof}
We first show that $|N_{G}(v)\cap(A\cup B)|\leq p+2$  for any $v\in  B\cup R$.
In fact, for any vertex $v\in R$, $|N_{G}(v)\cap B|\leq 1$. Otherwise, we can embed a copy of $F_{\ell}$ in $G$ as $|B|\geq p+\ell+1$, which is  a contradiction. Furthermore, by the definition of $R$, $|N_{G}(v)\cap A|\leq p-1$, which implies that
\begin{equation}\label{claim5-1}|N_{G}(v)\cap(A\cup B)|=|N_{G}(v)\cap B|+|N_{G}(v)\cap A|\leq 1+p-1=p.\end{equation}
For any vertex $v\in  B$, $|N_{G}(v)\cap B|\leq2$, otherwise we can embed an $F_{\ell}$ in $G$ as $|B|\geq p+\ell+1$, which is  a contradiction.  So  for any $v\in  B$,
\begin{equation}\label{claim5-3}
|N_{G}(v)\cap(A\cup B)|=|N_{G}(v)\cap B|+|N_{G}(v)\cap A|\leq 2+p=p+2.\end{equation}
Hence  by (\ref{claim5-1}) and (\ref{claim5-3}),  we have $|N_{G}(v)\cap(A\cup B)|\leq p+2$  for any $v\in  B\cup R$.

Note that $G[R]$ is  $P_{k_{\ell}}$-free, as otherwise we can embed  a copy of  $F_{\ell}$ in $G$ as $|B|\geq p+\ell+1$, which is  a contradiction. By Theorem~\ref{Thm1.1},
$2e(R)\leq (k_{\ell}-2)|R|$.
By (\ref{claim5-1}),
\begin{eqnarray*}
  \sum_{v\in R} x_v &=&  \frac{1}{\lambda }\sum_{v\in R} \sum_{uv\in E(G)} x_u
   \leq \frac{1}{\lambda } \sum_{v\in R}d(v)=\frac{1}{\lambda }( 2e(R)+e(R,A\cup B))\\
   &\leq&  \frac{1}{\lambda }(2e(R)+p|R|)\leq \frac{(k_{\ell}-2)|R|+p|R|}{\lambda}=\frac{(p+k_{\ell}-2)|R|}{\lambda}.
\end{eqnarray*}
Hence for any $v\in  B\cup R$,
\begin{eqnarray*}
  x_v &=& \frac{1}{\lambda } \sum\limits_{uv\in E(G)} x_u\leq \frac{1}{\lambda }\sum_{\substack{uv\in E(G)\\u\in A\cup B}}x_u+\frac{1}{\lambda }\sum_{\substack{uv\in E(G)\\u\in R}}x_u  \leq \frac{p+2}{\lambda }+\frac{1}{\lambda }\sum_{u\in R}x_u\\
   &\leq& \frac{p+2}{\lambda}+  \frac{(p+k_{\ell}-2)|R|}{\lambda^2}\\
   &\leq&\frac{p+2}{\sqrt{p(n-p)}} +\frac{(p+k_{\ell}-2)p(4p+5)^{2p-1}\epsilon n}{p(n-p)}\\
   &\leq&\frac{1}{2(k_{\ell}-1)}+\frac{1}{2(k_{\ell}-1)}
   =\frac{1}{k_{\ell}-1},
\end{eqnarray*}
So Claim~5 holds.\end{proof}


%

{\bf Claim~6.}  $R=V(G)\backslash (A\cup B)$ is empty.

\begin{proof}
Assume for contradiction that  $R$ is not empty. Note that $G[B\cup R]$ is $P_{k_{\ell}}$-free, otherwise  we can embed  a copy of  $F_{\ell}$ in $G$ as $|B|\geq p+\ell+1$. By Theorem~\ref{Thm1.1}, there is a vertex $v\in R$ with at most $k_{\ell}-2$ neighbors in $R$.
Let $H$ be a graph obtained from $G$ by removing all edges incident with $v$ and then connecting $v$ to each vertex in $A$. Clearly,
 $H$ is still $F_{\ell}$-free. By the definition of $R$, $v$ can be adjacent to at most $p-1$ vertices in $A$.
Let $u\in A$ be a vertex which is not adjacent to $v$ (if there are at least two such vertices, choose $u$ arbitrarily.  By Claims~4 and 5 again,
\begin{eqnarray*}
  \lambda(H)- \lambda &\geq& \frac{{\bf x}^T A(H){\bf x}}{ {\bf x}^T{\bf x}}-\frac{{\bf x}^T A{\bf x}}{ {\bf x}^T{\bf x}}
 \geq\frac{2x_v}{{\bf x}^T{\bf x}}\left( x_u-\sum_{\substack{vz\in E\\ z\in B\cup R}} x_z\right)\\
   &\geq&  \frac{2x_v}{{\bf x}^T{\bf x}}\left(1-(4p+5)^{2p-1}\epsilon-\frac{k_{\ell}-2}{k_{\ell}-1}\right)\\
   &=& \frac{2x_v}{{\bf x}^T{\bf x}}\left( \frac{1}{k_{\ell}-1}-(4p+5)^{2p-1}\epsilon  \right)\\
   &>&0,
\end{eqnarray*}
which implies that $\lambda(H)>\lambda$, a contradiction. This proves Claim~6.\end{proof}

By Claim~6, we have $V(G)=A\cup B$, where $|A|=p$ and $|B|=n-p$. So $K_{p,n-p}$ is spanning subgraph of $G$.
\end{Proof}

\medskip

\noindent{\bf Proof of Theorem~\ref{linear forest}.}  Let $G$ be an $F_{\ell}$-free bipartite  graph of   order  $n$ with the maximum spectral radius. Since $K_{p,n-p}$ is $F_{\ell}$-free,
\begin{eqnarray}\label{I2}
  \lambda(G) &\geq& \lambda(K_{p,n-p})=\sqrt{p(n-p)}.
\end{eqnarray}
If   $G$ is connected, then by Lemma~\ref{Lem3.1}, $K_{p,n-p}$ is spanning subgraph of $G$. Since $G$ is a bipartite graph, we have $G=K_{p,n-p}$.
Assume that $G$ is not connected. Let $G_1$ be a component of $G$  such that $\lambda(G_1)=\lambda(G)$.  Set $n_1=|V(G_1)|$.
Note that $G$ is triangle-free. By Wilf theorem \cite[Theorem~2]{Wilf1986}, we have
\begin{eqnarray*}
\frac{  n_1^2}{4} &\geq&  \lambda^2(G_1)=\lambda^2(G)\geq p(n-p),
\end{eqnarray*}
which implies that  $n_1\geq 2\sqrt{p(n-p)}$, i.e., $n_1$ is also sufficiently large. By Case~1,
$$\lambda(G)=\lambda(G_1)\leq \sqrt{p(n_1-p)}<\sqrt{p(n-p)},$$ which is a contradiction.
This completes the proof.
\QEDB
\vspace{3mm}

\noindent {\bf Proof of Corollary~\ref{Cor}.} By a result of Favaron, Mah\'{e}o and Sacl\'{e} \cite{FMS}, we have $\lambda_n(G) \geq \lambda_n(H)$  for some spanning bipartite subgraph $H$. Moreover, the equality holds if and only if $G= H$, which can be deduced by its original proof.
 By Theorem~\ref{linear forest},
$$\lambda(H)\leq \sqrt{p(n-p)}$$ with equality if and only if $H = K_{p,n-p}$.
Since  the spectrum of a bipartite graph is symmetric \cite{LP},  $$\lambda_n(H)\geq-\sqrt{p(n-p)}$$ with equality if and only if $H= K_{p,n-p}$.  Thus
 we have $$ \lambda_n(G)\geq -\sqrt{p(n-p)}$$
with equality if and only if $G= K_{p,n-p}$.
\QEDB


\begin{thebibliography}{12}

\bibitem{alon2003}N.~Alon,  M.~Krivelevich, B.~Sudakov,  Tur\'{a}n numbers of bipartite graphs and related Ramsey-type questions. Special issue on Ramsey theory. {\it Combin. Probab. Comput.}  {\bf 12} (2003), no. 5-6, 477--494.

\bibitem{babai2009}L.~Babai, B.~Guiduli,  Spectral extrema for graphs: the Zarankiewicz problem. {\it  Electron. J. Combin.} 16 (2009), no. 1, Research Paper 123, 8 pp.



%



	%
	
 \bibitem{Bushwa2011}   N. Bushaw, N. Kettle, Tur\'{a}n numbers of multiple paths and equibipartite forests, {\it Combin. Probab. Comput. } {\bf 20} (2011),  837--853.

\bibitem{campos2015} V.~Campos, R.~Lopes, A proof for a conjecture of Gorgol, {\it Electron. Notes  Discrete Math.}  {\bf 50} (2015), 367--372.
\bibitem{Chen2019} M.-Z.~Chen, A.-M.~Liu, X.-D.~Zhang, Spectral extremal results with forbidding linear forests, {\it Graphs Combin.} {\bf 35} (2019), 335--351.	
\bibitem{Erdos1959} P. Erd\H{o}s, T. Gallai, On maximal paths and circuits of graphs, {\it Acta Mathematica Hungarica} {\bf 10(3)} (1959), 337--356.
	
\bibitem{erdos1966}P.~Erd\H{o}s, M.~Simonovits,  A limit theorem in graph theory. {\it Studia Sci. Math. Hungar. } {\bf 1} (1966), 51--57.
\bibitem{erdos1946}P.~Erd\H{o}s, A.~H.~Stone, On the structure of linear graphs. {\it Bull. Amer. Math. Soc.} {\bf} 52 (1946), 1087--1091.
\bibitem{FMS} O. Favaron, M. Mah\'{e}o, J.-F. Sacl\'{e}, Some eigenvalue properties in graphs (conjectures of Graffiti-II), {\it Discrete Math.} {\bf 111} (1993), 197--220.
\bibitem{furedi1996-1}Z.~F\"{u}redi, New asymptotics for bipartite Tur\'{a}n numbers. {\it J. Combin. Theory Ser. A}  {\bf 75} (1996), no. 1, 141--144.
\bibitem{furedi1996-2}Z.~F\"{u}redi,  An upper bound on Zarankiewicz' problem. {\it Combin. Probab. Comput.} {\bf 5 }(1996), no. 1, 29--33.

\bibitem{furedi2013}Z.~F\"{u}redi, M.~Simonovits,  The history of degenerate (bipartite) extremal graph problems. Erd\H{o}s centennial, 169--264, Bolyai Soc. Math. Stud., 25, J\'{a}nos Bolyai Math. Soc., Budapest, 2013.
	\bibitem{gorgol} I.~Gorgol, Tur\'{a}n numbers for disjoint copies of graphs, {\it Graphs Combin.} {\bf 27} (2011), 661-667.
	\bibitem{Gyarfas-Rousseau-Schelp-1984} A. Gy\'arf\'as, C. C. Rousseau, R. H. Schelp, An extremal problem for paths in bipartite graphs. {\it J. Graph Theory}  {\bf 8(1)} (1984), 83--95.
\bibitem{jackson1981}{ B.~Jackson}, { Cycles in bipartite graphs}{\it J. Combin. Theory Ser. B}{ \bf 30}{1981}{ 332--342}

\bibitem{jiang2020}T.~Jiang, Y.~Qiu,  Tur\'{a}n numbers of bipartite subdivisions. {\it SIAM J. Discrete Math.} {\bf 34} (2020), no. 1, 556--570.
	\bibitem{Kopylov1977} G.~N.~Kopylov, On maximal paths and cycles in a graph, {\it Soviet Math. Dokl,} {\bf 18} (1977), 593--596.
	\bibitem{lidicky2013} B.~Lidick\'{y}, H.~Liu, C.~Palmer, On the Tur\'{a}n number of forests, {\it Electron. J. Combin.}   {\bf 20(2)}  (2013), Paper 62, 13 pp.
\bibitem{LP}    L.~Lov\'{a}sz, J.~Pelik\'{a}n, On the eigenvalues of trees, {\it Period. Math. Hung. } {\bf 3 } (1973), 175--182.



 \bibitem{Nikiforov2010}   V.~Nikiforov, The spectral radius of graphs without paths and cycles of specified length, {\it  Linear Algebra Appl.} {\bf 432} (2010), 2243--2256.
      \bibitem{Nikiforov2011}V.~Nikiforov,  Some new results in extremal graph theory. Surveys in combinatorics 2011, 141-181, London Math. Soc. Lecture Note Ser., 392, Cambridge Univ. Press, Cambridge, 2011.

     \bibitem{nikiforov2018}V.~Nikiforov, M.~Tait, C.~Timmons,  Degenerate Tur\'{a}n problems for hereditary properties. {\it Electron. J. Combin.}  {\bf 25} (2018), no. 4, Paper No. 4.39, 11 pp.
\bibitem{ning2020}B.~Ning, J.~Wang, The formula for Tur\'{a}n number of spanning linear forests. {\it Discrete Math.}  {\bf 343} (2020), no. 8, 111924, 6 pp.

\bibitem{sudakov2020}B.~Sudakov, I.~Tomon,  Tur\'{a}n number of bipartite graphs with no $K_{t,t}$,  {\it Proc. Amer. Math. Soc.} 148 (2020), no. 7, 2811--2818.
 \bibitem{Wilf1986}   H.S.~Wilf, Spectral bounds for the clique and independence numbers of graphs. {\it J. Combin. Theory Ser. B}  {\bf 40 } (1986), 113--117.
	\bibitem{Yuan2017} L.T.~~Yuan, X.-D.~Zhang, The Tur\'{a}n number of  disjoint copies of paths, {\it  Discrete Math.} {\bf 340(2)} (2017), 132--139.
	
	\bibitem{Yuan2019.2} L.-T.~Yuan, X.-D.~Zhang, Tur\'{a}n numbers for disjoint paths,  {\it  J. Graph Theory}, {\bf 98(3)} (2021), 499--524.
\bibitem{Yuan2019.1} L.-T.~Yuan, X.-D.~Zhang, Extremal graphs for linear forest in bipartite graphs,  {\it Discuss. Math. Graph Theory}, to appear.
\bibitem{Zhai2015} M.-Q.~Zhai, H.-Q~Lin, S.-C.~Gong, Spectral conditions for the existence of specified paths and cycles in graphs, {\it Linear Algebra Appl.} {\bf 471} (2015) 21--27.

    \bibitem{zhai2021}M.~Zhai, H.~Lin, J.~Shu,  Spectral extrema of graphs with fixed size: cycles and complete bipartite graphs. {\it European J. Combin.}  {\bf 95} (2021), Paper No. 103322, 18 pp.







%

\end{thebibliography}
\end{document}